\numberwithin{equation}{section}
\numberwithin{figure}{section}
\theoremstyle{plain}
\newtheorem{thm}{\protect\theoremname}[section]
  \theoremstyle{plain}
  \newtheorem{fact}[thm]{\protect\factname}
  \theoremstyle{plain}
  \newtheorem{prop}[thm]{\protect\propositionname}
  \theoremstyle{plain}
  \newtheorem{cor}[thm]{\protect\corollaryname}
  \theoremstyle{plain}
  \newtheorem{lem}[thm]{\protect\lemmaname}
\newcommand{\e}{\mathrm e}
\DeclareMathOperator{\Int}{Int}
\DeclareMathOperator{\card}{card}
\renewcommand{\hat}{\widehat}
\renewcommand{\phi}{\varphi}
\renewcommand{\Im}{\mathfrak{Im}}
\renewcommand{\emptyset}{\varnothing}
\renewcommand{\lg}{\log}
\newcommand{\G}{\mathcal{G}}
\newcommand{\X}{{\mathbb{H}}}
\DeclareMathOperator{\id}{id}
\definecolor{qqqqff}{rgb}{0.0,0.0,1.0}
\def\card {\mathop {\hbox{\rm card}}}
\def\R{\mathbb{R}}
\def\H{\mathbb{H}}
\def\N{\mathbb{N}}
\def\Z{\mathbb{Z}}
\numberwithin{equation}{section} 
\numberwithin{figure}{section} 
  \providecommand{\corollaryname}{Corollary}
  \providecommand{\factname}{Fact}
  \providecommand{\lemmaname}{Lemma}
  \providecommand{\propositionname}{Proposition}
\providecommand{\theoremname}{Theorem}
\begin{document}

\title{A Multifractal Analysis for Cuspidal Windings on Hyperbolic Surfaces}

\author{Johannes Jaerisch}
\address{Department of Mathematics, Faculty of Science and Engineering, Shimane
University, Nishikawatsu 1060, Matsue, Shimane, 690-8504 Japan.}

\email{jaerisch@riko.shimane-u.ac.jp}

\urladdr{www.math.shimane-u.ac.jp/\textasciitilde{}jaerisch/}

\author{Marc Kesseböhmer}

\address{Fachbereich 3 – Mathematik und Informatik, Universität Bremen, Bibliothekstr.
1, 28359 Bremen, Germany}

\email{mhk@math.uni-bremen.de}

\urladdr{www.math.uni-bremen.de/stochdyn}

\author{Sara Munday}

\address{Dipartimento di Matematica, Università di Pisa, Largo Pontecorvo 5,
Pisa, Italy}

\email{sara.munday@dm.unipi.it}

\date{\today}

\thanks{JJ was supported by the JSPS KAKENHI 90741869 and by the Research
Fellowship of the German Research Foundation (DFG-grant Ja 2145/1-1).
MK was supported by the German Research Foundation (DFG) grant \emph{Renewal
Theory and Statistics of Rare Events in Infinite Ergodic Theory} (Geschäftszeichen
Ke 1440/2-1). JJ and MK were supported by the DFG Scientific Network
\emph{Skew product dynamics and multifractal analysis} of the German
Research Foundation (DFG-grant Oe 538/3-1).}
\subjclass[2000]{11K50 primary; 37A45 11J06, 28A80 secondary}
\begin{abstract}
In this paper we investigate the multifractal decomposition of the
limit set of a finitely generated, free Fuchsian group with respect to the
mean cusp winding number. We will completely determine its multifractal
spectrum by means of a certain free energy function and show that
the Hausdorff dimension of sets consisting of limit points with the
same scaling exponent coincides with the Legendre transform of this
free energy function. As a by-product we generalise previously obtained
results on the multifractal formalism for infinite iterated function systems to the setting of infinite
graph directed Markov systems.

\end{abstract}

\keywords{Kleinian groups, multifractal formalism, cuspidal windings.}
\maketitle

\section{Introduction and statement of results}

In this paper we carry out a multifractal analysis of cusp windings of the geodesic
flow on $\X/G$, for a finitely generated,  free, non-elementary
Fuchsian group $G$ with parabolic elements acting on the upper half-space model $(\X,d)$
of $2$-dimensional hyperbolic space. Recall that to each $x$ in
the radial limit set $L_{r}(G)$ of $G$ one can associate an infinite
word whose letters come from the symmetric set $G_{0}$ of generators of $G$.
Note that $G$ can be
written as a free product $G=H\ast\Gamma$, where $H=\langle h_{1}\rangle\ast\ldots\ast\langle h_{u} \rangle$
denotes the free product of finitely many elementary hyperbolic groups,
and $\Gamma=\langle \gamma_{1} \rangle\ast\ldots\ast\langle \gamma_{v} \rangle$ denotes the free
product of finitely many parabolic subgroups of $G$ such that $ \langle\gamma_{i}\rangle$
is the parabolic subgroup of $G$ associated with the parabolic fixed
point $p_{i}$ (see also  \cite{MR2041265}). We will always assume that $v\geq1$; furthermore, the fact that $G$ is non-elementary
implies that $u+v>1$. Clearly, $ \langle\gamma_{i}\rangle\cong\mathbb{Z}$ and $\gamma_{i}^{\pm 1}(p_{i})=p_{i}$,
for all $i=1,\ldots,v$. It is well known \cite{MR1484767} that the Poincaré exponent
$\delta=\delta_{G}$ of $G$ coincides with the Hausdorff dimension $\dim_{\mathrm{H}}\left(L_{r}\left(G\right)\right)$
of the radial limit set of $G$.

There is a natural coding of the limit set by infinite sequences over
the set of generators. That is, with $F$ referring to the Dirichlet
fundamental domain of $G$ containing $i\in\X$, the images of $F$ under
$G$ tessellate $\X$ and each side of each of the tiles is uniquely
labelled by an element of $G_{0}$. The hyperbolic ray $s_{x}$ from
$i$ towards $x\in L_{r}(G)$ must traverse infinitely many of
these tiles, and the infinite word expansion associated with $x$
is then obtained by progressively recording, starting at $i$, the
generators of the sides at which $s_{x}$ exits the tiles. In this
way we derive an infinite  word $\omega=\left(\omega_{1},\omega_{2},\dots\right)\in G_{0}^{\N}$,
which is necessarily reduced,
where reduced means that $\omega_{n}\omega_{n+1}\neq\id$, for all $n\in\N$.
We then form a sequence of blocks $(B_{n})_{n\in\N}$ in this word in the following way. Each
hyperbolic generator that appears in the word is called a block of length $1$. Further,
if the same parabolic generator appears consecutively exactly
$n$ times, then this is called a block of length $n$. By construction,
such a block of length $n$ corresponds to the event that the projection
of $s_{x}$ onto $\X/G$ spirals precisely $n-1$ times around a
cusp of $\X/G$. This motivates our definition of the \emph{cusp winding
process} $(a_{k})$ by setting, for each $x\in L_r(G)$,
\[
a_{k}:=a_k(x):=  |B_k|-1,
\]
where $B_k:=B_k(x)$ denotes the $k$-th block in the infinite word associated to $x$ and $|B_k|$ denotes its length.
Our main aim is to investigate the fluctuation of a certain asymptotic exponential
scaling associated to this process, thereby extending results in \cite{MR2672614,MR2719683} for $\mbox{PSL}_{2}\left(\mathbb{Z}\right)$.
We say that the \emph{mean cusp-winding number} of $x\in L_{r}\left(G\right)$ is
given by
\[
\lim_{n\rightarrow\infty}\frac{2\sum_{k=1}^{n}\log^{+}(a_k)}{d\left(B_{1}\cdots B_{n}\left(i\right),i\right)},
\]
whenever the limit exists. Here, $\log^{+}(a)\coloneqq\left\{
                                                        \begin{array}{ll}
                                                          \log(a), & \hbox{when $a>0$;} \\
                                                          0, & \hbox{when $a=0$.}
                                                        \end{array}
                                                      \right.
$.
The fluctuation of this quantity is captured in the following level sets with prescribed scaling constant $\alpha\in\R$ given by
\[
\mathcal{F}_{\alpha}\coloneqq\left\{ x\in L_{r}\left(G\right)\mid\lim_{n\rightarrow\infty}\frac{2\sum_{k=1}^{n}\log^{+}(a_k)}{d\left(B_{1}\cdots B_{n}\left(i\right),i\right)}=\alpha\right\},
\]
and
\[
\mathcal{F}_{\alpha}^{*}\coloneqq \begin{cases}
\left\{ x\in L_{r}\left(G\right)\mid\limsup_{n\rightarrow\infty}\frac{2\sum_{k=1}^{n}\log^{+}(a_k)}{d\left(B_{1}\cdots B_{n}\left(i\right),i\right)}\geq\alpha\right\} , & \alpha\geq\alpha_{0},\\
\left\{ x\in L_{r}\left(G\right)\mid\liminf_{n\rightarrow\infty}\frac{2\sum_{k=1}^{n}\log^{+}(a_k)}{d\left(B_{1}\cdots B_{n}\left(i\right),i\right)}\leq\alpha\right\} , & \alpha<\alpha_{0}.
\end{cases}
\]

The  following facts will be proved in Section \ref{section:facts}.
\begin{fact}
\label{fact2} The subset of the limit set
\[
L_{\textrm{\emph{c}}}=\left\{ x\in L_{r}(G)\mid a_{k}(x)\le1\mbox{ for all }k\in\N\right\},
\]
encoding those geodesic rays whose maximal winding around any given
cusp is at most one, is contained in $\mathcal{F}_{0}$. In particular, where  $H_{0}\coloneqq\left\{ h_{i},h_{i}^{-1}\mid1\le i\le u\right\} $
denotes the symmetric set of hyperbolic generators, the set of limit points $L\left(\left\langle H_{0}\right\rangle \right)$
that can be coded exclusively via hyperbolic elements is contained
in $\mathcal{F}_{0}$. That is,
\[
L\left(\left\langle H_{0}\right\rangle \right)\subset L_{\textrm{\emph{c}}}\subset\mathcal{F}_{0}.
\]

\end{fact}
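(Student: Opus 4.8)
The plan is to establish the chain $L(\langle H_{0}\rangle)\subset L_{\mathrm{c}}\subset\mathcal{F}_{0}$ by treating the two inclusions separately; both reduce essentially to unwinding the definitions of the cusp winding process $(a_k)$ and of $\log^{+}$.

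For the first inclusion, I would take $x\in L(\langle H_{0}\rangle)$ and observe that, because $G=H\ast\Gamma$ is a free product and the geometric coding of Section~1 respects this decomposition (equivalently, the sub-tessellation of $\X$ by the $H$-translates of the fundamental domain $F$), the reduced word $\omega(x)$ associated to $x$ lies entirely in $H_{0}^{\N}$ and contains no parabolic generator. Consequently every block $B_{k}(x)$ consists of a single hyperbolic generator, so $|B_{k}(x)|=1$ and $a_{k}(x)=|B_{k}(x)|-1=0\le 1$ for all $k\in\N$; hence $x\in L_{\mathrm{c}}$.

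For the second inclusion, let $x\in L_{\mathrm{c}}$, so $a_{k}(x)\in\{0,1\}$ for every $k$. Since $\log^{+}(0)=0$ by definition and $\log^{+}(1)=\log 1=0$, the numerator $2\sum_{k=1}^{n}\log^{+}(a_{k}(x))$ vanishes identically in $n$. It then remains only to check that the quotient is well defined, i.e. that the denominator $d(B_{1}\cdots B_{n}(i),i)$ is strictly positive for each $n\ge 1$: this holds because $B_{1}\cdots B_{n}$ is a nontrivial reduced word in $G_{0}$, hence a nontrivial element of the free — in particular torsion-free — group $G$, which therefore acts freely on $\X$ and moves the point $i$ (one may moreover note $d(B_{1}\cdots B_{n}(i),i)\to\infty$ since $x$ is a radial limit point, though this is not needed). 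Thus the defining ratio equals $0$ for every $n$, its limit exists and equals $0$, so $x\in\mathcal{F}_{0}$.

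Combining the two inclusions gives the statement. I do not expect a genuine obstacle here; the only point deserving a sentence of justification is the claim that a limit point of $\langle H_{0}\rangle$ is coded exclusively by hyperbolic generators, which I would pin down by appealing to the compatibility of the tile-based coding with the free-product normal form for $G=H\ast\Gamma$, together with torsion-freeness of $G$ to guarantee $B_{1}\cdots B_{n}(i)\neq i$.
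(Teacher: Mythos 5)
Your proposal is correct and follows essentially the same route as the paper: the paper's proof simply observes that $L_{c}\subset\mathcal{F}_{0}$ is immediate from the definitions (since $a_{k}\le 1$ forces $\log^{+}(a_{k})=0$, making the numerator vanish identically), treating $L(\langle H_{0}\rangle)\subset L_{c}$ as evident from the coding. Your additional remarks (positivity of the denominator via torsion-freeness, and the free-product compatibility of the coding) just make explicit the details the paper leaves unstated.
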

\begin{fact}
\label{fact3} We have that the set  $\left\{ x\in L_{r}(G)\mid\lim_{n\rightarrow\infty}n^{-1}\sum_{i=1}^{n}\log a_{i}\left(x\right)=\infty\right\} $ is contained in $\mathcal{F}_{1}$.
In particular, for the Jarník set
\[
\mathcal{J}\coloneqq\left\{ x\in L_{r}\left(G\right)\mid\lim_{n\rightarrow\infty}a_{n}\left(x\right)=\infty\:\mbox{and }\lim_{n\rightarrow\infty}\frac{\log a_{n}\left(x\right)}{d\left(B_{1}\cdots B_{n}\left(i\right),i\right)}=0\right\}
\]
considered in \cite{MR2900557} we have $\mathcal{J}\subset\mathcal{F}_{1}$
and $\dim_{\mathrm{H}}\left(\mathcal{J}\right)=1/2$.
\end{fact}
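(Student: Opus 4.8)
The plan is to reduce everything to one uniform comparison between hyperbolic displacement and the cusp-winding data, after which the two inclusions are immediate and the dimension of $\mathcal{J}$ splits into a thermodynamic upper bound and an explicit Cantor subset. The comparison I would first establish (it is also needed for Fact~\ref{fact2} and belongs to the geometric preliminaries of Section~\ref{section:facts}) is that there is a constant $C=C(G)\ge 1$ with
\[
\left| d(B_1\cdots B_n(i),i)-2\sum_{k=1}^{n}\log^{+}(a_k)\right|\le Cn\qquad\text{for every }x\in L_r(G),\ n\in\N .
\]
It combines two facts. For a single block, $d(B_k(i),i)=2\log^{+}(a_k)+O(1)$: for a parabolic block $B_k=\gamma_j^{\,a_k+1}$ this is the classical asymptotics $d(\gamma_j^{m}(i),i)=2\log m+O(1)$ for a rank-one parabolic, and for a length-one hyperbolic block it is trivially $O(1)$. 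Secondly, the broken path $i,\ B_1(i),\ B_1B_2(i),\dots,\ B_1\cdots B_n(i)$ is a uniform quasigeodesic, since consecutive tiles $B_1\cdots B_{k-1}(F)$ and $B_1\cdots B_k(F)$ of the Dirichlet tessellation are glued along sides meeting at angles bounded away from $0$; a Morse/fellow-travelling argument then gives $d(B_1\cdots B_n(i),i)=\sum_{k=1}^{n}d(B_k(i),i)+O(n)$, and the two estimates combine to the display.

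Granting this, if $\tfrac1n\sum_{i=1}^{n}\log a_i(x)\to\infty$ then, since $\log^{+}\ge\log$, the partial sums $S_n:=\sum_{k=1}^{n}\log^{+}(a_k(x))$ also satisfy $S_n/n\to\infty$, so $Cn=o(S_n)$ and the comparison gives
\[
\frac{2\sum_{k=1}^{n}\log^{+}(a_k)}{d(B_1\cdots B_n(i),i)}=\frac{2S_n}{2S_n+O(n)}\longrightarrow 1 ,
\]
that is, $x\in\mathcal{F}_1$. And if $x\in\mathcal{J}$ then $a_n(x)\to\infty$, hence $\log a_n(x)\to\infty$, hence by Cesàro summation $\tfrac1n\sum_{i=1}^{n}\log a_i(x)\to\infty$; so $\mathcal{J}\subset\mathcal{F}_1$.

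For $\dim_{\mathrm{H}}(\mathcal{J})=1/2$, the upper bound comes from observing that for each fixed $N\in\N$,
\[
\mathcal{J}\subseteq\bigcup_{m\in\N}\{\,x\in L_r(G):a_n(x)\ge N\text{ for all }n\ge m\,\},
\]
a countable union of sets each of which, up to the bi-Lipschitz action of a fixed element of $G$, is the limit set $\Lambda_N$ of the infinite graph directed Markov system built from the parabolic blocks of length $\ge N+1$; by the dimension formula for such systems $\dim_{\mathrm{H}}(\Lambda_N)$ equals the Poincaré exponent $\theta_N$ of that subsystem, and since $d(\gamma_j^{m}(i),i)=2\log m+O(1)$ its Poincaré series at exponent $s$ is comparable to a geometric series in $\sum_{m\ge N}m^{-2s}$, which is finite for every $s>1/2$ once $N$ is large; thus $\theta_N\downarrow 1/2$ and $\dim_{\mathrm{H}}(\mathcal{J})\le\inf_N\theta_N=1/2$. (Alternatively the upper bound is immediate from $\mathcal{J}\subset\mathcal{F}_1$ and the value $\dim_{\mathrm{H}}(\mathcal{F}_1)=1/2$ furnished by the main theorem.) For the lower bound I would fix a sequence $(N_k)$ with $N_k\to\infty$ as slowly as we like and take the Cantor set $E$ of all $x$ whose $k$-th block is parabolic of length $m_k\in[N_k,2N_k)$, with consecutive blocks using different cusps; then $a_k(x)\asymp N_k\to\infty$, while by the comparison $d(B_1\cdots B_n(i),i)=2\sum_{k\le n}\log N_k+O(n)$, so $\log a_n(x)=o\big(d(B_1\cdots B_n(i),i)\big)$ once $(N_k)$ grows slowly enough, whence $E\subset\mathcal{J}$. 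Placing the uniform measure on the coding tree of $E$ — mass $\asymp\prod_{k\le n}N_k^{-1}$ on each level-$n$ cylinder, of diameter $\asymp\exp(-d(B_1\cdots B_n(i),i))$ by the shadow lemma — and applying the mass distribution principle yields $\dim_{\mathrm{H}}(E)\ge\liminf_n\frac{\sum_{k\le n}\log N_k}{2\sum_{k\le n}\log N_k}=1/2$.

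The only genuinely delicate point is the uniform quasigeodesic estimate — keeping the error term $O(n)$ independent of how deeply the coded geodesic penetrates the horoballs around the cusps — together with, in the lower bound, choosing the growth of $(N_k)$ so that the extra Jarník condition $\log a_n/d(B_1\cdots B_n(i),i)\to 0$ holds simultaneously with full dimension $1/2$, which also requires the usual passage from cylinders to balls in the mass distribution principle. Everything else is routine bookkeeping with the comparison estimate.
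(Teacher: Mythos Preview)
Your comparison estimate $|d(B_1\cdots B_n(i),i)-2\sum_{k\le n}\log^{+}(a_k)|\le Cn$ is exactly what drives the paper's proof: it is the content of (\ref{eq:distance-additive}) combined with Corollary~\ref{cor:winding estimate}, which the paper obtains not from a general quasigeodesic/Morse argument but by cutting the geodesic segment from $i$ to $B_1\cdots B_n(i)$ into arcs $\xi_1,\dots,\xi_{n+1}$ along images of fixed horocircles at the cusps and computing the length of each arc directly (Proposition~\ref{prop:parabolic_excursion_length}). Your stated justification has a slip: when $|B_k|\ge 2$ the tiles $B_1\cdots B_{k-1}(F)$ and $B_1\cdots B_k(F)$ are \emph{not} adjacent in the Dirichlet tessellation, so the ``glued along sides meeting at angles bounded away from $0$'' picture does not literally apply; the horocircle-arc computation in Section~\ref{subsec:Horocircles-and-basic} sidesteps this and makes the constants explicit. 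Once the $O(n)$ estimate is in hand, your derivation of the inclusion and of $\mathcal{J}\subset\mathcal{F}_1$ via Ces\`aro is identical to the paper's.

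For $\dim_{\mathrm{H}}(\mathcal{J})=1/2$ the paper gives no argument and simply cites \cite{MR2900557}; your sketch goes well beyond this and is in the spirit of that reference. Two remarks. First, your parenthetical alternative for the upper bound --- invoke $\dim_{\mathrm{H}}(\mathcal{F}_1)=1/2$ from Theorem~\ref{main} --- needs care: the proof of Theorem~\ref{main} quotes precisely $\dim_{\mathrm{H}}(\mathcal{J})=1/2$ from the present fact to get the \emph{lower} bound at $\alpha=1$, so to avoid circularity you may cite only the upper-bound half $\dim_{\mathrm{H}}(\mathcal{F}_1)\le 1/2$, which is established independently via Lemma~\ref{lem:tAsymptoticFor-right} and Theorem~\ref{thm:mf}. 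Second, your Cantor-set lower bound tacitly assumes at least two inequivalent cusps ($v\ge 2$), since with a single cusp no two consecutive blocks can both be parabolic and hence $a_n(x)\to\infty$ is impossible; this hypothesis is in fact implicit in the statement itself.
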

\begin{fact}
\label{fact4} For every $x\in\mathcal{F}_{1}$ we have that the sequence\/
$\left(a_{n}\left(x\right)\right)_{n\in\N}$ is unbounded.
\end{fact}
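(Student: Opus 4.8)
The plan is to argue by contraposition: I will show that if the sequence $\left(a_{n}(x)\right)_{n\in\N}$ is bounded, then $x\notin\mathcal{F}_{1}$, because in that case the ratio defining the mean cusp-winding number stays bounded away from $1$ (indeed, I will show its $\limsup$ is strictly less than $1$). So suppose $a_{n}(x)\le M$ for all $n$, with $M\in\N$ fixed. The numerator is then bounded by $2n\log^{+}(M)$. The work is all in bounding the denominator $d\left(B_{1}\cdots B_{n}(i),i\right)$ from below by a quantity that grows at least linearly in $n$ with a slope that, combined with the numerator bound, forces the ratio below $1$.

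The key step is a lower estimate for the hyperbolic displacement of a word built from $n$ blocks of bounded length. Each block $B_{k}$ is either a single hyperbolic generator or a parabolic generator repeated $a_{k}+1\le M+1$ times. Using the standard fact that for a finitely generated Fuchsian group the displacement $d\left(g(i),i\right)$ is comparable, up to additive and multiplicative constants, to the word length $\|g\|$ in the generators $G_{0}$ (this follows from the Milnor--Švarc lemma, or can be extracted from the geometric coding already set up in the introduction), we get
\[
d\left(B_{1}\cdots B_{n}(i),i\right)\;\ge\;c_{1}\sum_{k=1}^{n}|B_{k}|\;-\;c_{2}\;\ge\;c_{1}n-c_{2},
\]
for constants $c_{1}>0$, $c_{2}\ge 0$ depending only on $G$. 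Hence the ratio is at most $\dfrac{2n\log^{+}(M)}{c_{1}n-c_{2}}$, whose limit is $\dfrac{2\log^{+}(M)}{c_{1}}$. This is a finite constant depending only on $M$ and $G$, but a priori it need not be smaller than $1$; so a crude bound is not quite enough, and this is where the main obstacle lies.

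To close the gap I would sharpen the denominator estimate by exploiting that a parabolic block of length $\ell$ contributes displacement on the order of $2\log\ell + O(1)$ rather than just $O(1)$: a parabolic isometry $\gamma_{i}^{\ell}$ moves $i$ a hyperbolic distance $2\log\ell+O(1)$, and along the coded geodesic these contributions add up (up to uniformly bounded overlap, by a standard fellow-traveller / Gromov-hyperbolicity argument for the geodesic ray versus the broken geodesic through the tiles). Thus
\[
d\left(B_{1}\cdots B_{n}(i),i\right)\;\ge\;\sum_{k=1}^{n}\bigl(2\log^{+}(a_{k})+c_{0}\bigr)\;-\;c_{2},
\]
with $c_{0}>0$ the per-block hyperbolic cost of leaving a tile through a new side. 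Consequently
\[
\frac{2\sum_{k=1}^{n}\log^{+}(a_{k})}{d\left(B_{1}\cdots B_{n}(i),i\right)}\;\le\;\frac{2\sum_{k=1}^{n}\log^{+}(a_{k})}{2\sum_{k=1}^{n}\log^{+}(a_{k})+c_{0}n-c_{2}},
\]
and since $c_{0}n$ grows linearly while the competing term is fixed relative to it, the right-hand side has $\limsup$ strictly less than $1$. Therefore $x\notin\mathcal{F}_{1}$, proving the contrapositive. The delicate point, and the step I expect to spend the most care on, is justifying the additive lower bound with the \emph{uniform} constant $c_{0}$ per block — i.e.\ that successive blocks cannot cancel each other's geometric cost — which rests on the reducedness of the coding word together with a uniform transversality of the tile sides along the geodesic ray; I would cite or adapt the geometric estimates underlying the coding (cf.\ \cite{MR2041265}, \cite{MR2900557}) rather than redo them from scratch.
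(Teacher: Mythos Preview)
Your proposal is correct and follows essentially the same route as the paper's proof: contraposition, followed by the key lower bound $d(B_{1}\cdots B_{n}(i),i)\ge\sum_{k=1}^{n}\bigl(2\log^{+}(a_{k})+c_{0}\bigr)$, which yields $\limsup\le\bigl(1+c_{0}/(2\log M)\bigr)^{-1}<1$. The estimate you flag as the ``delicate point'' is exactly what the paper establishes in (\ref{eq:distance-additive}) and (\ref{eq:lower_and_upper_bound_a_i>1}) via the horocircle arc decomposition $\xi_{1},\dots,\xi_{n+1}$ (so $c_{0}=C_{1}$ and no $c_{2}$ is needed), and your argument then matches the paper's line for line.
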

\begin{fact}
\label{fact1}We have $\mathcal{F}_{\alpha}\neq\emptyset$ if and
only if $\alpha\in\left[0,1\right]$.
\end{fact}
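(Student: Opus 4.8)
The plan is to prove the two inclusions $\mathcal{F}_\alpha\neq\emptyset\ \Rightarrow\ \alpha\in[0,1]$ and $\alpha\in[0,1]\ \Rightarrow\ \mathcal{F}_\alpha\neq\emptyset$ separately. For the forward direction, fix $x\in L_r(G)$ for which the defining limit exists and equals $\alpha$. The numerator $2\sum_{k=1}^n\log^+(a_k)$ is non-negative and the denominator $d(B_1\cdots B_n(i),i)$ is positive and increasing in $n$, so every finite-$n$ quotient is $\ge 0$; hence $\alpha\ge 0$. For the upper bound $\alpha\le 1$, I would compare each block's contribution to the numerator with its contribution to the hyperbolic displacement. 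A block $B_k$ consisting of a parabolic generator $\gamma$ repeated $a_k+1$ times satisfies $d(B_k(i),i)\sim 2\log a_k$ (up to an additive constant depending only on $G$), by the standard estimate for the displacement of powers of a parabolic isometry; a hyperbolic block contributes a bounded displacement and $0$ to the numerator. Combining these via the triangle inequality (which gives $d(B_1\cdots B_n(i),i)\le\sum_k d(B_k(i),i)$ but one also needs the matching lower bound $d(B_1\cdots B_n(i),i)\ge c\sum_k d(B_k(i),i) - C n$, valid by standard hyperbolicity/contraction arguments for reduced words, since the axes stay a bounded distance apart) yields that the numerator is at most the denominator plus a lower-order error, so $\alpha\le 1$.

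For the reverse direction, the plan is an explicit construction: given $\alpha\in[0,1]$, build a reduced word $\omega\in G_0^\N$ whose block sequence $(a_k)$ realizes the prescribed ratio $\alpha$. For $\alpha=0$, Fact~\ref{fact2} already provides points (e.g. any point of $L(\langle H_0\rangle)\subset\mathcal F_0$, using $u+v>1$ together with $v\ge 1$ to guarantee a reduced infinite word avoiding repeated parabolics exists — actually one needs $u\ge 1$ here, so if $u=0$ one instead takes a word where every parabolic appears exactly once between switches, which still lies in $L_{\mathrm c}\subset\mathcal F_0$). For $\alpha=1$, Fact~\ref{fact3} supplies points (any $x$ with $n^{-1}\sum\log a_i\to\infty$, e.g. $a_k = \lfloor e^{k}\rfloor$ realized by a suitable reduced word). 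For intermediate $\alpha\in(0,1)$, I would interleave long "parabolic" stretches contributing ratio close to $1$ with long "hyperbolic" stretches contributing ratio $0$, choosing the relative lengths of these stretches so that the running average of the ratio converges to $\alpha$; the key bookkeeping is that by the displacement estimates above, a parabolic block of winding $a$ adds $\approx 2\log a$ to both numerator and denominator while a hyperbolic block adds $O(1)$ to the denominator and $0$ to the numerator, so by taking the hyperbolic runs long enough one dilutes the ratio down to any target value, and one tunes the switching times to force the limit (not merely the liminf/limsup) to exist and equal $\alpha$.

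The main obstacle is the two-sided control of the displacement $d(B_1\cdots B_n(i),i)$ in terms of the individual block displacements: the upper bound is just the triangle inequality, but the lower bound $d(B_1\cdots B_n(i),i)\ge c\sum_{k=1}^n d(B_k(i),i) - Cn$ requires quantitative hyperbolicity — one must show that successive pieces of the trajectory $i\to B_1(i)\to B_1B_2(i)\to\cdots$ do not cancel, which follows because the word is reduced and the translates of the fundamental domain tessellate $\X$, so the geodesic ray $s_x$ genuinely crosses each tile; making this uniform in the block lengths (so that the additive $Cn$ error is genuinely lower-order once many blocks are long) is the delicate point and is presumably established in Section~\ref{section:facts}. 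Once that estimate is in hand, both directions are routine: the forward one is the inequality chain above, and the reverse one is the interpolation construction with the switching times chosen by a straightforward greedy/diagonal argument.
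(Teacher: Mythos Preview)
Your forward direction has the right idea but a loose end: the inequality you state, $d(B_1\cdots B_n(i),i)\ge c\sum_k d(B_k(i),i)-Cn$ with an unspecified $c$ and a negative $-Cn$ term, only yields $\alpha\le 1/c$, and your ``lower-order'' claim for the $Cn$ error fails precisely when all $a_k$ stay bounded (then both the numerator and $Cn$ are $\asymp n$). The paper sidesteps this by decomposing the \emph{actual} geodesic from $i$ to $B_1\cdots B_n(i)$ into consecutive arcs $\xi_k$, so that $d(B_1\cdots B_n(i),i)\ge\sum_{k=1}^n l(\xi_k)$ with no multiplicative loss and no negative correction (equation~\eqref{eq:distance-additive}), and then uses $l(\xi_k)\ge 2\log^+(a_k)+C_1$ with $C_1>0$ from Corollary~\ref{cor:winding estimate}. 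This gives the denominator $\ge 2\sum_k\log^+ a_k + nC_1$ directly, so the ratio is strictly below $1$ for every $n$ and every $x$.

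For the reverse direction your route genuinely differs from the paper's. The paper does not construct any points in $\mathcal{F}_\alpha$ for intermediate $\alpha$: it invokes Schmeling~\cite{MR1738952} for the general fact that $\{\alpha:\mathcal{F}_\alpha\neq\emptyset\}$ is always an interval, and then Facts~\ref{fact2} and~\ref{fact3} place $0$ and $1$ in that interval. Your explicit interleaving construction is more elementary and self-contained, and with the usual care in choosing switching times so that the running ratio's oscillation decays it would work; the paper's approach trades that page of bookkeeping for a one-line citation.
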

Since these level sets are generally Lebesgue null sets, the Hausdorff dimension $\dim_{\mathrm{H}}\left(\mathcal{F}_{\alpha}\right)$
is an appropriate quantity to measure the size of the sets $\mathcal{F}_{\alpha}$.
In this paper we will give a complete analysis of the \emph{cusp-winding
spectrum}
\[
f\left(\alpha\right)\coloneqq\dim_{\mathrm{{H}}}\left(\mathcal{F}_{\alpha}\right),\qquad\alpha\in\R.
\]

Using the Thermodynamic Formalism we will be able to express the function
$f$ on $[0,1]$ implicitly in terms of the \emph{cusp-winding pressure}
\emph{function}
\[
P\left(t,\beta\right)\coloneqq\lim_{n\rightarrow\infty}\frac{1}{n}\lg\sum_{B_{1}\cdots B_{n}\in\G_{n}}\e^{-td\left(B_{1}\cdots B_{n}\left(i\right),i\right)-2\beta\sum_{k=1}^{n}\log^{+}(a_k)},\quad t,\beta\in\R,
\]
where $\G_{n}\coloneqq\left\{ B_{1}\left(x\right)\cdots B_{n}\left(x\right)\in G\mid\,\,\,x\in L_{r}\left(G\right)\right\} $.
Setting, for each $n\in \N$,
\[
Z_{n}\coloneqq\lg\sum_{B_{1}\cdots B_{n}\in\G_{n}}\e^{-td\left(B_{1}\cdots B_{n}\left(i\right),i\right)-2\beta\sum_{k=1}^{n}\log^{+}(a_k)},\,\,\,n\in\mathbb{N},
\]
we have that the sequence $\left(Z_{n}\right)_{n}$
is almost sub-additive in the sense that $Z_{n+m}\leq Z_{n}+Z_{m}+2\left|t\right|C_{0}$
for a certain constant $C_{0}>0$ which will be defined in Section \ref{subsec:Horocircles-and-basic}.
In fact, this estimate boils down to the inequality $d\left(B_{1}\cdots B_{n+m}\left(i\right),i\right)\ge d\left(B_{1}\cdots B_{n}\left(i\right),i\right)+d\left(B_{1}\cdots B_{m}\left(i\right),i\right)-2C_{0}$,
which follows from (\ref{eq:distance-additive}) below. This is a consequence of the triangle inequality and  the definition of the blocks
$B_{i}$. Hence the limit of $\left(Z_{n}/n\right)$ exists and is
equal to
\begin{equation}
\lim_{n\to\infty}\frac{Z_{n}}{n}=\inf_{k\in\mathbb{N}}\left(\frac{Z_{k}}{k}+\frac{2\left|t\right|C_{0}}{k}\right).\label{eq:Subadditive}
\end{equation}

We shall see in Lemma \ref{lem:pressure is pressure} that $P$ coincides
with the dynamically-defined topological pressure function given in
(\ref{eq:DynPressure}). By Proposition \ref{pro:exTf(beta)} we have
that for every $\beta\in\R$ there exists a unique number $t=t\left(\beta\right)$,
such that $P\left(t\left(\beta\right),\beta\right)=0$. We denote
by $\beta\mapsto t\left(\beta\right)$ the \emph{cusp-winding free
energy function} (see Fig. \ref{fig:The-arithmetic-geometric-fluctuation}).
For any real-valued convex function $g$ we let $\hat{g}\colon\R\to\R\cup\left\{ \infty\right\} $
denote the \emph{Legendre transform of $g$}, which is defined to be \[\hat{g}\left(p\right)\coloneqq\sup\left\{ cp-g(c)\colon c\in\R\right\}, \text{ for all }
p\in\R.\] Further, let $\delta=\delta(G)$ denote the exponent of convergence
of the Poincar\'{e} series of $G$. Now we are in the position to
state our main theorem.

\begin{figure}
\includegraphics[width=1\textwidth]{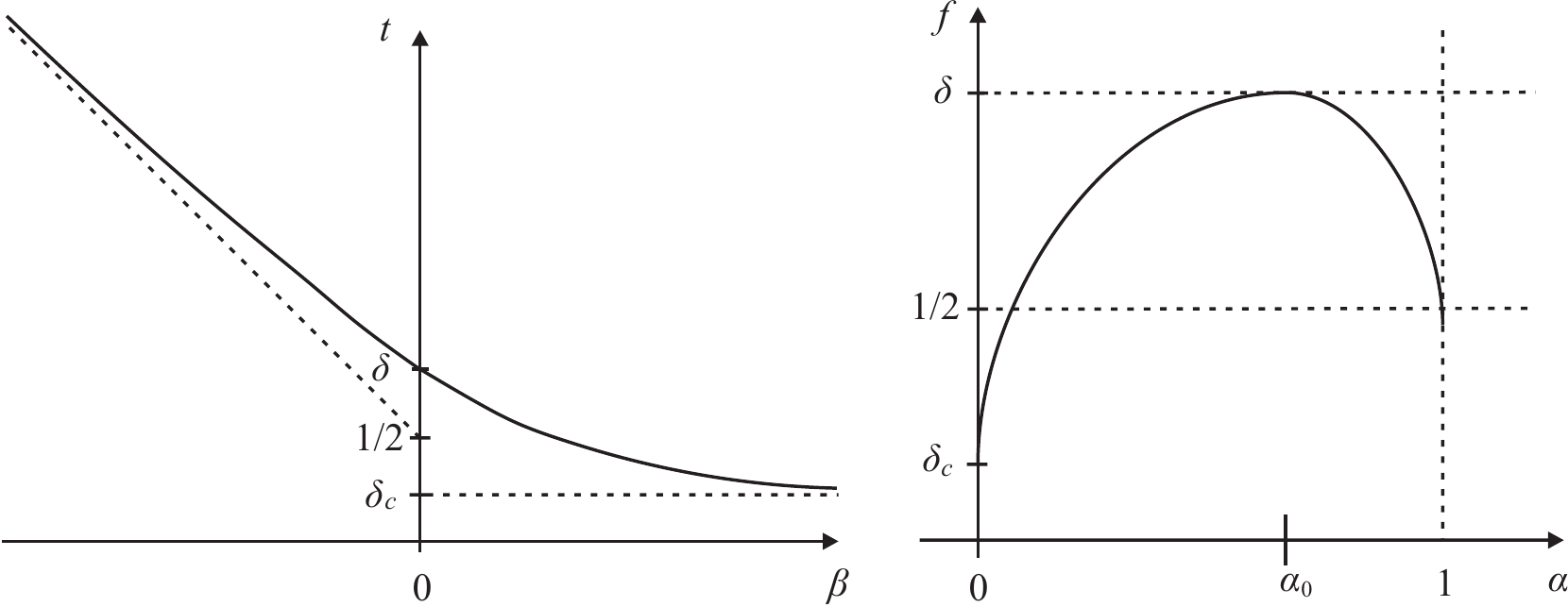}

\caption{\label{fig:The-arithmetic-geometric-fluctuation}The \emph{cusp-winding
free energy function} $\beta\protect\mapsto t\left(\beta\right)$
and the scaling spectrum $\alpha\protect\mapsto f\left(\alpha\right)$. }
\end{figure}

\begin{thm}
[Multifractal cusp-winding analysis]\label{main}For each
free, finitely generated non-elementary Fuchsian group $G$ with parabolic
elements the following statements hold. The Hausdorff dimension for the cusp
winding spectrum is given by
\[
f\left(\alpha\right)=\max\left\{ -\hat{t}\left(-\alpha\right),0\right\} ,\qquad\alpha\in\R.
\]
 \textup{\emph{The function $f\big|_{\left[0,1\right]}$ is strictly
concave, continuous, real-analytic on}}\/\textup{\emph{ $\left(0,1\right)$
and its maximal value is equal to }}\/ $\delta(G)$
(cf. Fig \ref{fig:The-arithmetic-geometric-fluctuation}). For the
boundary points we have
\[
f\left(0\right)=\delta_{c}\coloneqq\dim_{\mathrm{H}}\left(L_{c}\right),\:\:f\left(1\right)=\frac{1}{2},\:\:\mbox{and }\:\lim_{\alpha\searrow0}f'\left(\alpha\right)=-\lim_{\alpha\nearrow1}f'\left(\alpha\right)=+\infty.
\]

Further, the set of points for which the  mean cusp winding number does not exist has full Hausdorff dimension $\delta(G)$.
\end{thm}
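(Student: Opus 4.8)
The plan is as follows. Since every point under consideration lies in $L_{r}(G)$ and $\dim_{\mathrm H}(L_{r}(G))=\delta(G)$, the upper bound ``$\le\delta(G)$'' is automatic, and it suffices to exhibit, for each $\epsilon>0$, a subset
\[
W=W_{\epsilon}\subset\Bigl\{x\in L_{r}(G)\ \Bigm|\ \lim_{n\to\infty}\frac{2\sum_{k=1}^{n}\log^{+}(a_{k}(x))}{d\bigl(B_{1}\cdots B_{n}(i),i\bigr)}\ \text{does not exist}\Bigr\}
\]
with $\dim_{\mathrm H}(W)\ge\delta(G)-\epsilon$. First I would fix two reference exponents. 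By Theorem~\ref{main}, $f\big|_{[0,1]}$ is continuous, strictly concave, and attains the value $\delta(G)$, which is strictly larger than $f(0)=\delta_{c}$ and $f(1)=1/2$ (by strict concavity together with $\lim_{\alpha\searrow0}f'(\alpha)=-\lim_{\alpha\nearrow1}f'(\alpha)=+\infty$), so its maximum is attained at an interior point; hence one can choose $\alpha_{1},\alpha_{2}\in(0,1)$ with $\alpha_{1}\ne\alpha_{2}$ and $\min\{f(\alpha_{1}),f(\alpha_{2})\}>\delta(G)-\epsilon$.

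Next I would invoke the thermodynamic formalism underlying Theorem~\ref{main}: for $j\in\{1,2\}$ it produces a sub-system of the block coding of $L_{r}(G)$ generated by finitely many blocks, together with a Bernoulli measure $\mu_{j}$ on the associated compact shift space, such that (i) for $\mu_{j}$-a.e.\ coded point $x$ the defining ratio converges to $\alpha_{j}$, and (ii) the Hausdorff dimension of $\pi_{*}\mu_{j}$ exceeds $\delta(G)-\epsilon$, where $\pi$ denotes the coding map; these are precisely the measures one uses to establish the lower bound for $f(\alpha_{j})$. Because only finitely many blocks occur, the block lengths, the values $\log^{+}(a_{k})$ and the per-block increments of $d(B_{1}\cdots B_{n}(i),i)$ are uniformly bounded along this sub-system, so by Egorov's theorem I may fix a Borel set of $\mu_{j}$-measure $>1/2$ on which the normalised sums $n^{-1}\cdot2\sum_{i\le n}\log^{+}(a_{i})$ and $n^{-1}d(B_{1}\cdots B_{n}(i),i)$ converge uniformly to constants $A_{j}=2\int\log^{+}a\,\dd\mu_{j}$ and $D_{j}>0$, with $A_{j}/D_{j}=\alpha_{j}$.

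Then I would carry out an interlacing Moran-type construction: choose rapidly increasing positive integers $m_{1}\ll m_{2}\ll\cdots$ with $m_{k}/(m_{1}+\dots+m_{k-1})\to\infty$, and let $W$ consist of those coded points whose first $m_{1}$ blocks form a cylinder meeting the uniform set of $\mu_{1}$, whose next $m_{2}$ blocks do so for $\mu_{2}$, whose next $m_{3}$ blocks again for $\mu_{1}$, and so on, inserting between consecutive windows a fixed word of length at most two (a suitable hyperbolic or parabolic generator) so that the concatenation stays reduced and the block structure is undisturbed; each $x\in W$ is then a genuine radial limit point. Writing $N_{k}$ for the number of blocks used through the $k$-th window, I evaluate the defining ratio at $n=N_{k}$: by the triangle inequality and the recorded estimate $d(B_{1}\cdots B_{n+m}(i),i)\ge d(B_{1}\cdots B_{n}(i),i)+d(B_{1}\cdots B_{m}(i),i)-2C_{0}$, numerator and denominator each split, up to an additive $O(1)$, into the contribution of the first $N_{k-1}$ blocks, which is $O(N_{k-1})=o(m_{k})$ by boundedness, and the contribution of the $k$-th window, which by the uniform convergence equals $A_{j}m_{k}(1+o(1))$ and $D_{j}m_{k}(1+o(1))$ respectively, with $j=1$ for odd $k$ and $j=2$ for even $k$. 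Hence the ratio at $n=N_{k}$ tends to $A_{j}/D_{j}=\alpha_{j}$; since $\alpha_{1}\ne\alpha_{2}$, the limit along the full sequence fails to exist, so $W$ has the desired property.

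Finally I would bound $\dim_{\mathrm H}(W)$ from below by the mass distribution principle applied to the measure $\nu$ on $W$ that, independently across windows, distributes mass on the $k$-th window according to $\mu_{j}$ ($j=1$ for odd $k$, $j=2$ for even $k$) conditioned on the chosen uniform set. Using bounded distortion of the two finite sub-systems, the Shannon--McMillan--Breiman theorem for $\mu_{1}$ and $\mu_{2}$, and the fast growth $m_{k}/(m_{1}+\dots+m_{k-1})\to\infty$ (so that the windows governed by the ``wrong'' measure have asymptotically negligible total length), I would show that for $\nu$-a.e.\ $x$
\[
\liminf_{r\to0}\frac{\log\nu(B(x,r))}{\log r}\ \ge\ \min\{\dim_{\mathrm H}(\pi_{*}\mu_{1}),\dim_{\mathrm H}(\pi_{*}\mu_{2})\}\ >\ \delta(G)-\epsilon,
\]
whence $\dim_{\mathrm H}(W)\ge\delta(G)-\epsilon$; letting $\epsilon\downarrow0$ completes the argument. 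I expect the main obstacle to be this last step, namely transferring the entropy/Lyapunov (hence dimension) estimate from the single-measure situation to the interlaced construction and verifying that the switching times can be made sparse enough that neither the dimension computation nor the oscillation argument is spoiled; by contrast, the non-existence of the limit and the admissibility of the concatenations are routine, and the two reference measures are already available from the proof of Theorem~\ref{main}.
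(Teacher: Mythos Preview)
Your proposal addresses only the final assertion of Theorem~\ref{main}---that the irregular set has full dimension $\delta(G)$---and takes the remaining claims (the formula $f(\alpha)=\max\{-\hat t(-\alpha),0\}$, strict concavity and real-analyticity of $f$ on $(0,1)$, the boundary values $f(0)=\delta_c$, $f(1)=1/2$, and the derivative blow-up) as given. In the paper these are the substance of the proof: they are obtained by realising $L_r(G)$ as the limit set of a conformal graph directed Markov system, invoking the general multifractal formalism of Theorem~\ref{thm:mf} for the interior $(0,1)$, and then handling the two boundary points separately via Lemmas~\ref{lem:Asymp_t_right} and~\ref{lem:tAsymptoticFor-right} together with Facts~\ref{fact2}--\ref{fact4}. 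None of this is touched by your proposal, so as a proof of the full theorem it is incomplete; moreover, your appeal ``By Theorem~\ref{main}, $f|_{[0,1]}$ is continuous, strictly concave \dots'' is circular unless you explicitly separate out and first establish those earlier parts.

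Restricted to the irregular-set claim, your approach is sound and genuinely different from the paper's. The paper disposes of this in one line: exhaust $\Sigma_A$ by finite-alphabet subsystems and cite Barreira--Schmeling \cite{MR1759398}, which already gives full dimension for the non-typical set in each finite subshift. Your argument instead unpacks that machinery by hand: pick two interior levels $\alpha_1\neq\alpha_2$ with $f(\alpha_j)>\delta(G)-\epsilon$, take the corresponding equilibrium/Gibbs measures on finite subsystems (these are Gibbs for $t(\beta)\varphi+\beta\psi$, not Bernoulli---a minor terminological slip), and build an interlaced Moran set on which the ratio oscillates between $\alpha_1$ and $\alpha_2$. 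The oscillation step and the admissibility of concatenations are indeed routine, and your use of \eqref{eq:distance-additive} to control the denominator is exactly right. The step you flag as the obstacle---transferring the pointwise-dimension estimate across the interlacing via the mass distribution principle, with $m_k/(m_1+\cdots+m_{k-1})\to\infty$---is the standard Barreira--Schmeling computation and goes through here because both subsystems are finitely generated (hence satisfy bounded distortion and have uniformly bounded geometric potential). What your route buys is self-containment; what the paper's one-line citation buys is brevity and the observation that nothing specific to the cusp-winding setting is needed beyond the finite-alphabet exhaustion already used for the lower bound in Theorem~\ref{thm:mf}.
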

 If we could  apply Theorem \ref{main}  to the modular surface  we would relate the cuspidal winding spectrum to the arithmetic-geometric spectrum investigated in \cite{MR2672614}. In fact, if  instead of  the modular group PSL$_{2}(\Z)$ we consider a torsion-free normal modular subgroup  $\Gamma(2)$ of index $6$, the condition  of  freeness would be achieved for $\Gamma(2)$. Then the arithmetic-geometric scaling level sets in  \cite{MR2672614} correspond to the level sets of the mean cuspidal winding for $\Gamma(2)$.

Another main result of this paper is  the multifractal formalism for level sets of quotients of Birkhoff sums in the context of conformal graph directed Markov systems (see Theorem \ref{thm:mf}). This result is obtained by extending our previous results in the context of conformal iterated function systems (\cite{MR2719683}). By using the methods of \cite{MR2719683}, we are able to develop the multifractal formalism without any  technical summability assumptions on the thermodynamic potentials, which have been imposed, for example,  in \cite[Section 4.9]{MR2003772}.

\section{Ergodic Theory for Fuchsian Groups}

In this section we study the ergodic theory for Fuchsian groups with
parabolic elements. In Section \ref{subsec:The-canonical-Markov},
we describe the action of $G$ on $\partial\H$ by a canonical Markov
map which is referred to as the Bowen--Series map.
In Section \ref{subsec:Horocircles-and-basic}, we give the crucial geometric estimates which underlie the whole multifractal  analysis.
In Section \ref{subsec:Inducing-and-the-Gibbs_Markov_Map},
we  set up  the induced
dynamics of the Bowen--Series map on the complement of some neighbourhood
of the parabolic fixed points of $G$. For the induced system we can later use Gibbs theory to prove the multifractal formalism.

\subsection{The canonical Markov map\label{subsec:The-canonical-Markov}}

As already mentioned in the introduction, throughout we exclusively
consider a finitely generated,  free  Fuchsian group $G$. Recall that $G$ can be
written as a free product $G=H\ast\Gamma$, where $H=\langle h_{1} \rangle\ast\ldots\ast\langle h_{u} \rangle$
denotes the free product of finitely many elementary, hyperbolic groups,
and $\Gamma=\langle \gamma_{1} \rangle\ast\ldots\ast\langle \gamma_{v} \rangle$ denotes the free
product of finitely many parabolic subgroups $ \langle\gamma_{i}\rangle$
 of $G$   with the parabolic fixed
point $p_{i}=\gamma_{i}^{\pm}(p_{i})$,  $i=1,\ldots,v$.  Since $G$ is finitely generated,  $G$ admits the choice of a Poincaré polyhedron $F$ with a finite
set $\mathcal{F}$ of faces. Let us now first recall from \cite{MR2158407}
the construction of the relevant coding map $T$ associated with $G$,
which maps the radial limit set $L_{r}(G)$ into itself. This construction
parallels the construction of the well-known Bowen--Series map (cf.
\cite{MR556585}, \cite{MR2098779}). For $\xi,\eta\in L_{r}(G)$,
let $\gamma_{\xi,\eta}:\R\to\H$ denote the directed geodesic from
$\eta$ to $\xi$ such that $\gamma_{\xi,\eta}$ intersects the closure
$\overline{F}$ of $F$ in $\H$, and normalised such that $\gamma_{\xi,\eta}(0)$ is the point on the geodesic from $\eta$ to $\xi$ which is closest to $i$. The exit time $e_{\xi,\eta}$
is defined by
\[
e_{\xi,\eta}\coloneqq\sup\left\{ s\mid\gamma_{\xi,\eta}(s)\in\overline{F}\right\} .
\]
 Since $\xi,\eta\in L_{r}(G)$, we clearly have that $|e_{\xi,\eta}|<\infty$.
By Poincaré's Polyhedron Theorem (cf. \cite{MR1279064}), we have
that the set $\mathcal{F}$ carries an involution $\mathcal{F}\to\mathcal{F}$,
given by $s\mapsto s'$ and $s''=s$. In particular, for each $s\in\mathcal{F}$
there is a unique \emph{face-pairing transformation} $g_{s}\in G$
such that $g_{s}(\overline{F})\cap\overline{F}=s'$. We then let
\[
\mathcal{L}_{r}(G)\coloneqq\{(\xi,\eta)\mid\xi,\eta\in L_{r}(G),\xi\neq\eta\mbox{ and }\exists\,t\in\R\;:\;\gamma_{\xi,\eta}(t)\in\overline{F}\},
\]
 and define the map $S:\mathcal{L}_{r}(G)\to\mathcal{L}_{r}(G)$,
for all $(\xi,\eta)\in\mathcal{L}_{r}(G)$ such that $\gamma_{\xi,\eta}(e_{\xi,\eta})\in s$,
for some $s\in\mathcal{F}$, by
\[
S(\xi,\eta)\coloneqq(g_{s}(\xi),g_{s}(\eta)).
\]
 In order to show that the map $S$ admits a Markov partition, we
introduce the following collection of subsets of the boundary $\partial\H$
of $\H$. For $s\in\mathcal{F}$, let $A_{s}$ refer to the open hyperbolic
half-space for which $F\subset\H\setminus A_{s}$ and $s\subset\partial A_{s}$.
We then define the projection $b_{s}$ of the side $s$ to $\partial\H$
by
\begin{equation}
b_{s}\coloneqq\mbox{Int}\left(\mbox{Cl}_{\overline{\H}}(A_{s})\cap\partial\H\right). \label{bs-def}
\end{equation}
Clearly, $b_{s}\cap b_{t}=\emptyset$, for all $s,t\in\mathcal{F},s\neq t$.
Hence, by the convexity of $F$, we have that $\gamma_{\xi,\eta}(e_{\xi,\eta})\in s$
if and only if $\xi\in b_{s}$. In other words, $S(\xi,\eta)=(g_{s}(\xi),g_{s}(\eta))$
for all $\xi\in b_{s}$. This immediately gives that the projection
map $p_{1}:(\xi,\eta)\mapsto\xi$ onto the first coordinate of $\mathcal{L}_{r}(G)$
leads to a canonical factor $T$ of $S$, that is, we obtain the map
\[
T:L_{r}(G)\to L_{r}(G),\mbox{ given by }T\arrowvert_{b_{s}\cap L_{r}(G)}\coloneqq g_{s}.
\]
 Clearly, $T$ satisfies $p_{1}\circ S=T\circ p_{1}$. Since $T(b_{s})=g_{s}(b_{s})=\mbox{Int}(\partial\H\setminus b_{s'})$,
it follows that $T$ is a non-invertible Markov map with respect to
the partition $\{b_{s}\cap L_{r}(G)\mid s\in\mathcal{F}\}$. For this
so-obtained expansive map $T$ we then have the following result.
\begin{prop}
[{\cite[Proposition 2, Proposition 3]{MR2158407}}] The map $T$
is a topologically mixing Markov map with respect to the partition
generated by $\{b_{s}\cap L_{r}(G)\mid s\in\mathcal{F}\}$. Moreover,
the map $S$ is the natural extension of $T$.
\end{prop}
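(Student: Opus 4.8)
The plan is to reduce both assertions to the symbolic dynamics of the subshift of finite type attached to the Markov partition and then transport the conclusions back through the coding. Write $\mathcal{P}=\{b_{s}\cap L_{r}(G)\mid s\in\mathcal{F}\}$ for the partition already constructed, and record the transition rule: since $g_{s'}=g_{s}^{-1}$ and $g_{s}g_{s'}=\id$, a word over $\mathcal{F}$ is reduced precisely when consecutive faces avoid the pairing, and by the identity $T(b_{s})=\mbox{Int}(\partial\H\setminus b_{s'})$ we have $b_{t}\subset T(b_{s})$ if and only if $t\neq s'$. Hence the transition matrix $A$ on the alphabet $\mathcal{F}$ is given by $A(s,t)=1\Leftrightarrow t\neq s'$, and the expansivity of $T$ recorded above yields a coding conjugacy between $(L_{r}(G),T)$ and the one-sided subshift $(\Sigma_{A}^{+},\sigma)$ determined by $A$.

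For topological mixing I would verify that $A$ is primitive. First, every face is distinct from its pair, $s\neq s'$, so $A(s,s)=1$ for all $s$ and the diagonal of $A$ is strictly positive. Second, $A$ is irreducible: because $G$ is non-elementary with $v\geq1$ there are at least two face-pairs, so $|\mathcal{F}|=2(u+v)\geq4$, and for an ordered pair $(s,t)$ either $t\neq s'$ (a direct edge) or else one interpolates a face $r\notin\{s,s'\}$, giving the admissible path $s\to r\to s'$. An irreducible nonnegative matrix with strictly positive diagonal is aperiodic, hence primitive, so $A^{n}$ is strictly positive for all large $n$ and $(\Sigma_{A}^{+},\sigma)$ is topologically mixing. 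Transporting this through the coding conjugacy, using that cylinders form a basis and that the Markov property sends a length-$k$ cylinder onto the union of all $b_{t}$ with $t$ admissible after its final symbol, shows that for any nonempty open $U,V\subset L_{r}(G)$ one has $T^{n}(U)\cap V\neq\varnothing$ for all large $n$; that is, $T$ is topologically mixing.

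For the natural extension I would first exhibit $S$ as invertible. The essential geometric point is that the first coordinate records the exit side of the geodesic $\gamma_{\xi,\eta}$ through $\overline{F}$ while the second coordinate records its entry side: since $g_{s}(\overline{F})\cap\overline{F}=s'$, the image geodesic $\gamma_{g_{s}\xi,g_{s}\eta}$ enters $\overline{F}$ through $s'$, so the backward endpoint satisfies $g_{s}\eta\in b_{s'}$. The face applied by $S$ is therefore read off from the partition element containing the second coordinate, giving $S^{-1}(\xi,\eta)=(g_{s'}\xi,g_{s'}\eta)$ where $s'$ is the unique face with $\eta\in b_{s'}$, so $S$ is a bijection of $\mathcal{L}_{r}(G)$. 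Applying the same coding to both coordinates conjugates $(\mathcal{L}_{r}(G),S)$ to the two-sided subshift $(\Sigma_{A},\sigma)$, splicing the forward itinerary of $\xi$ and the reversed itinerary of $\eta$ into a single bi-infinite $A$-admissible word; under these conjugacies the relation $p_{1}\circ S=T\circ p_{1}$ already noted corresponds to the canonical projection $\Sigma_{A}\to\Sigma_{A}^{+}$ forgetting the negative coordinates.

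It then remains to confirm the defining property of the natural extension, namely that $\bigvee_{n\geq0}S^{n}(p_{1}^{-1}\mathcal{P})$ generates the Borel $\sigma$-algebra of $\mathcal{L}_{r}(G)$; equivalently, that the full bi-infinite itinerary determines $(\xi,\eta)$. This is injectivity of the two-sided coding, which follows from expansivity in both time directions: forward expansivity is the property already recorded for $T$, and backward expansivity is its mirror, obtained from the uniform contraction of the complementary shadows under $g_{s}^{-1}$. Granting this, $(\Sigma_{A},\sigma)$ is the natural extension of $(\Sigma_{A}^{+},\sigma)$ by the standard inverse-limit characterisation, and transporting back shows that $S$ is the natural extension of $T$. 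I expect the main obstacle to be exactly the geometric lemma underpinning invertibility and backward expansivity, that $g_{s}$ carries the backward endpoint into $b_{s'}$ and uniformly contracts complementary shadows, since everything else is either formal symbolic dynamics or already established; this lemma rests on the convexity of $\overline{F}$ and on the face-pairings moving $\overline{F}$ onto the neighbouring tile across the paired side.
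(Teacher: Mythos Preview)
The paper does not give its own proof of this proposition; it is quoted verbatim from \cite[Propositions~2 and~3]{MR2158407} and left unproved. So there is nothing to compare your argument against at the level of ideas: the paper's ``approach'' is simply to cite the reference.

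That said, your outline is essentially sound and is the standard way one would establish the result directly. The primitivity argument for mixing is correct: freeness of $G$ guarantees $s\neq s'$ (no order-$2$ elements), hence $A(s,s)=1$, and $|\mathcal{F}|\geq4$ gives irreducibility as you indicate. For the natural extension, the key geometric step --- that $g_s\eta\in b_{s'}$ so the second coordinate remembers the last applied generator --- is exactly right and follows from $g_s(\overline{F})\cap\overline{F}=s'$. One minor caution: calling the coding a \emph{conjugacy} is slightly glib, since endpoints of the $b_s$ belonging to hyperbolic generators can lie in $L_r(G)$ and have ambiguous addresses; but this affects at most a countable set and does not interfere with either the mixing conclusion or the natural-extension identification, so the argument goes through. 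If you were to write this out in full, the only place requiring genuine care is the one you flagged yourself, namely the uniform backward contraction needed for injectivity of the bi-infinite coding.
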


\subsection{Horocircles and basic estimates\label{subsec:Horocircles-and-basic}}

\begin{figure}
\includegraphics[width=0.8\textwidth]{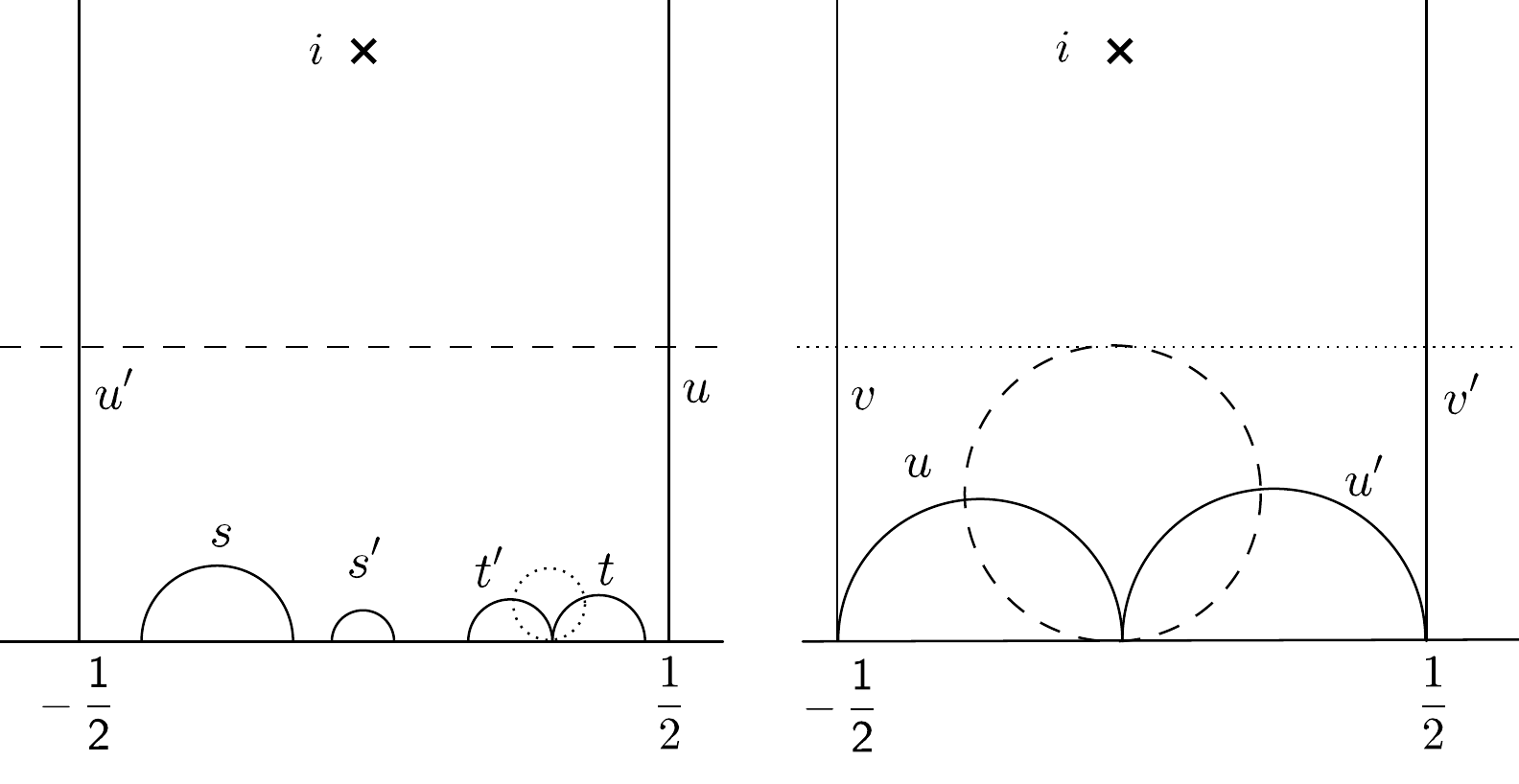}

\caption{\label{fig:Horocircle}On the left we show a typical fundamental domain
with two parabolic and one hyperbolic generator. On the right we show
the largest possible horocircle with Euclidean height $1/2$ corresponding
to the cusp at $0$ just touching another horocircle with Euclidean
height $1/2$ corresponding to the cusp at infinity. }
\end{figure}


Recall that for each $x\in L_{r}(G)$ we let $B_{k}(x)$ denote
the $k$-th block of the infinite word expansion of the geodesic ray from $i$ towards $x$ and $a_{k}(x):=\left|B_{k}\left(x\right)\right|-1$.
In order to obtain estimates for $d\left(B_{1}\cdots B_{n}\left(i\right),i\right)$,
we introduce the following horocircles. For each parabolic generator
$\gamma_{i}\in\Gamma_{0}$ with fixed point $p_{i}$, we define the
\emph{horocircle} $H_{i}$ \emph{of Euclidean height} $1/2$ which
is given as follows: there exists a unique $\Delta\in\mathrm{PSL}_{2}(\R)$
such that $\Delta(p_{i})=\infty$ and $\Delta^{-1}\gamma_{i}\Delta(z)=z+1$.
Then we set $H_{i}\coloneqq\Delta^{-1}(\{\Im(z)=1/2\})$. Another
way to define $H_{i}$ is to require that the collar geodesic, i.e.,
the projection of $H_{i}$ to $\mathbb{H}/G$, has hyperbolic length
$2$.
These horocircles are pairwise disjoint. To see this, first note that without loss of generality we can assume that one of the parabolic generators of $G$ is of the form $z\mapsto z+1$, with fixed point at infinity. The fact that $G$ is a free group ensures that the edges of the Dirichlet fundamental domain for $G$ do not intersect. An example is shown in the left-hand side of Figure  \ref{fig:Horocircle}.
In the  configuration on the right of Figure \ref{fig:Horocircle} the horocircle has  maximal Euclidean height. It is therefore sufficient to verify that the horocircles are disjoint in that case. Note that this depicts a group which is not free, to have a free group we would need to shrink the edges labelled $u$ and $u'$ so that they don't touch the vertical lines. This necessarily shrinks the Euclidean height of the horocircle, as otherwise the length of the projected collar geodesic would be greater than 2.
Let $\Delta\in\mathrm{PSL}_{2}(\R)$
be given by $\Delta(z)\coloneqq-1/(4z)$. Then the geodesic $u$ in
Figure \ref{fig:Horocircle} (right) is mapped to the vertical line
through $\Delta(-1/2)=1/2$. Similarly, the geodesic $u'$ is mapped
to the vertical line through $\Delta(1/2)=-1/2$. Hence, the parabolic
generator $\gamma$ corresponding to the side-pairing of $u$ and
$u'$ satisfies $\Delta^{-1}\gamma\Delta(z)=z+1$ and the dashed horocircle
through $0$ is represented by the horizontal line through $\Delta(i/2)=i/2$.
We have thus shown that the dashed horocircle through zero has Euclidean
height $1/2$.

To estimate $d\left(B_{1}\cdots B_{n}\left(i\right),i\right)$,  we will
partition the directed geodesic segment $\xi$ which goes from $i$ to $B_{1}\cdots B_{n}\left(i\right)$ and
corresponds to $x\in L_{r}\left(G\right)$ into $(n+1)$ arcs $\xi_{1},\dots,\xi_{n+1}$
as follows. The first arc $\xi_{1}$ starts at $i$ and the last arc
$\xi_{n+1}$ terminates at $B_{1}\cdots B_{n}\left(i\right)$. For
$k=1,\ldots,n$, the arc $\xi_{k}$ terminates at the first intersection
of $\xi$ and an image of a side of the Dirichlet fundamental domain
corresponding to $G$ centred in $i$, whenever $\left|B_{k}\right|=1$,
or the intersection of $\xi$ with the first horocircle $H_{j}$,
$j=1,\ldots,\nu$, when leaving this horocircle. The terminating point
of $\xi_{k}$ coincides with the initial point of the following arc $\xi_{k+1}$.

If we cut off the cusps of $\mathbb{H}/G$ along the horocircles $H_{1},\dots,H_{\nu}$,
then we obtain a compact manifold which in particular has a finite
diameter $C_{0}>0$. Since $\xi_{n+1}$ is contained in this compact
set we have for all $n\in\N$ that,

\begin{figure}[H]
\includegraphics[width=0.7\textwidth]{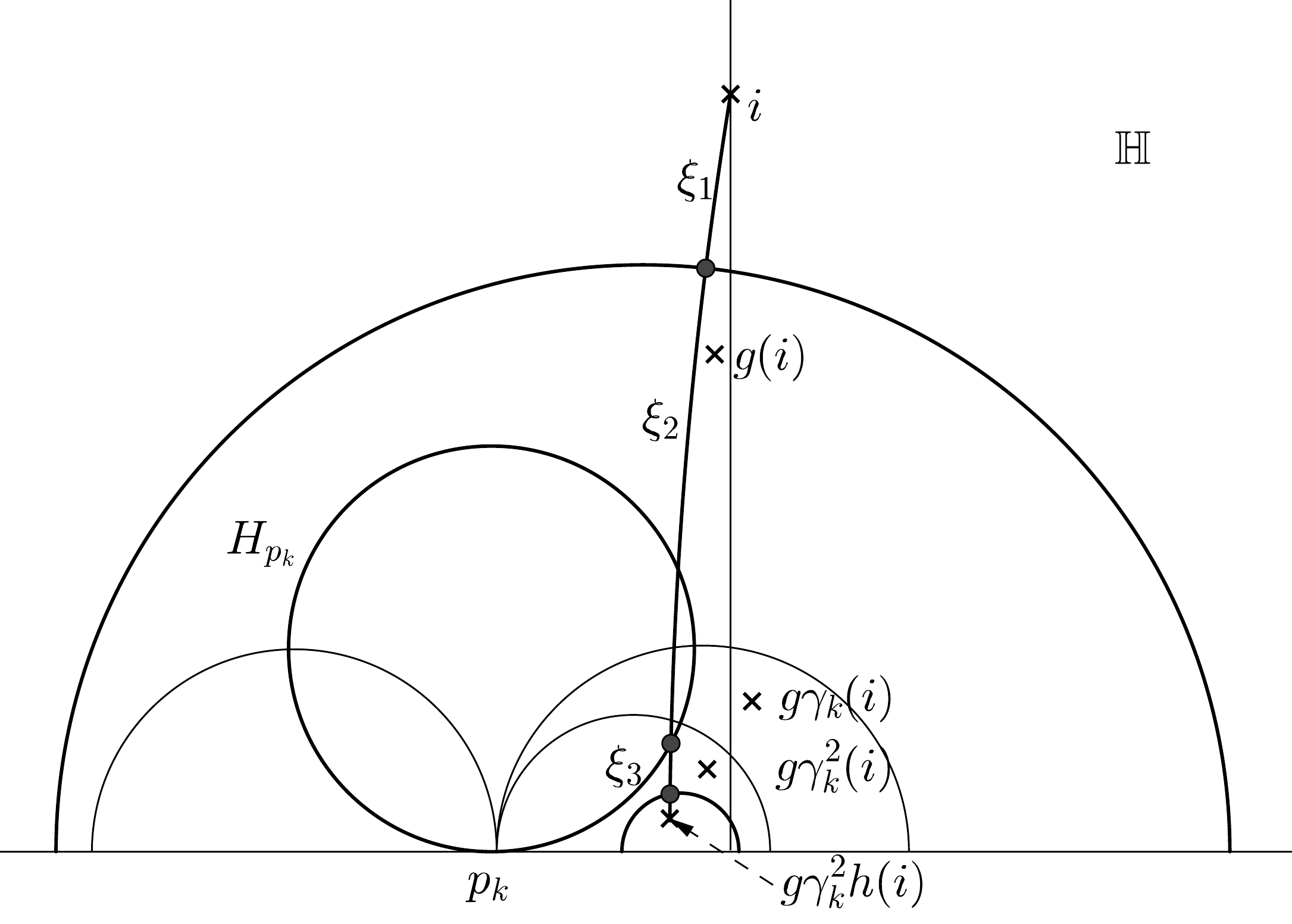}

\caption{The first three geodesic arcs $\xi_{1},\xi_{2},\xi_{3}$ connecting
$i$ and $g\gamma_{k}^{2}h\left(i\right)$, $\gamma_{k}\in\Gamma_{0}$,
$g,h\in H_{0}$. \label{geodesicArches}}
\end{figure}

\begin{equation}
\sum_{k=1}^{n}l(\xi_{k})\le d(B_{1}\cdots B_{n}(i),i)=\sum_{k=1}^{n+1}l(\xi_{k})\le\sum_{k=1}^{n}l(\xi_{k})+C_{0},\label{eq:distance-additive}
\end{equation}
 where $l(\xi_{k})$ refers to the hyperbolic length of the arc $\xi_{k}$ (see Figure \ref{geodesicArches}).

Before proving the facts stated in the introduction we will need the
following geometric observation.

\begin{figure}[H]
\includegraphics[width=0.6\textwidth]{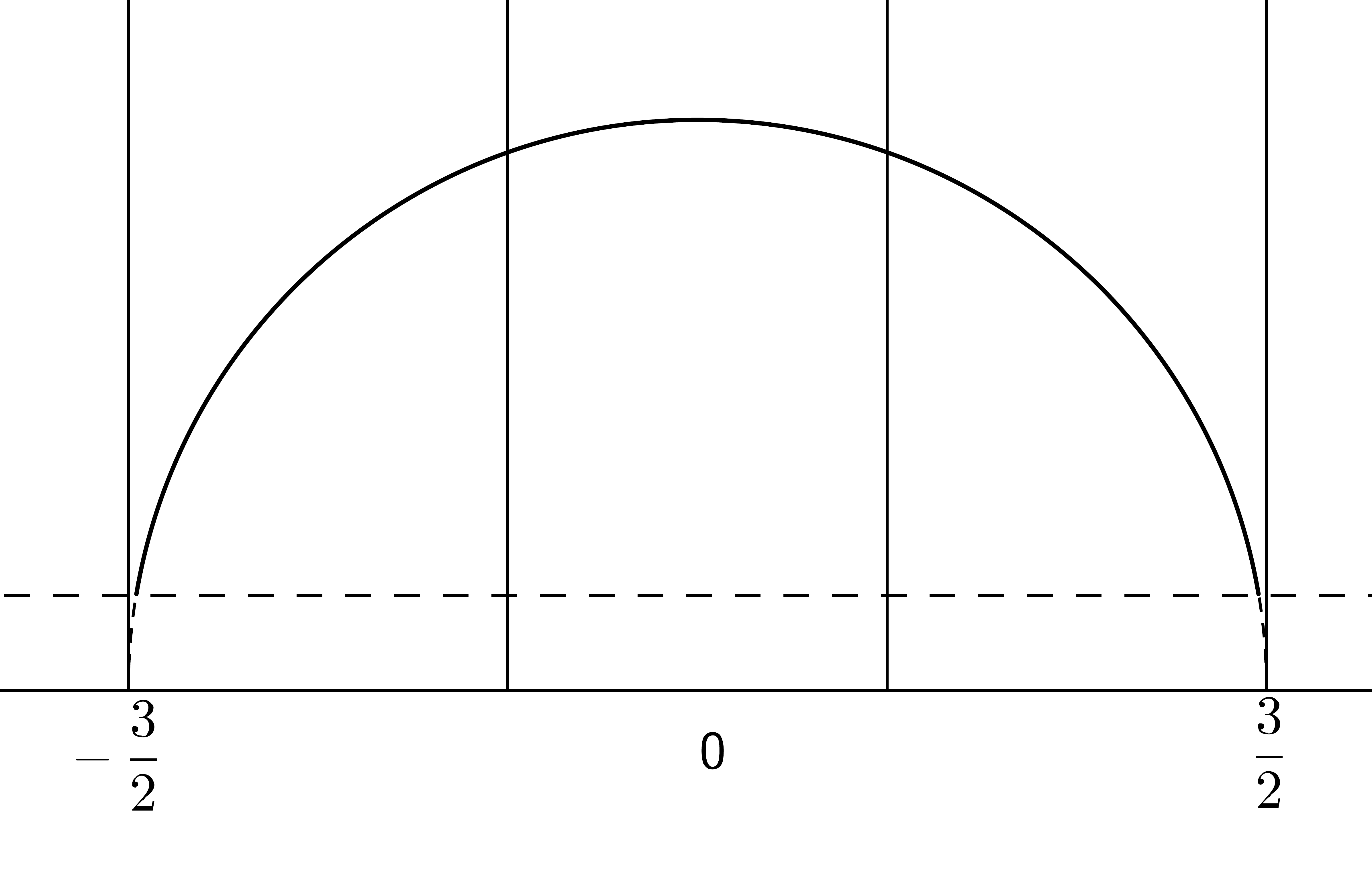}

\caption{\label{geometricObservation} The longest possible parabolic excursion above $\left\{ \Im\left(z\right)=1/2\right\} $
(dotted line) corresponding to  $n=4$ in Proposition
\ref{prop:parabolic_excursion_length}.}
\end{figure}

\begin{prop}
\label{prop:parabolic_excursion_length} For $n\ge3$ we have that
the length of the geodesic arc $\xi$ lying above $\left\{ \Im\left(z\right)=1/2\right\} $
and connecting $\pm(n-1)/2$ (cf. Figure \ref{geometricObservation}) lies between the two constants $\log\left(3\cdot\left(n-1\right)^{2}\right)$
and $\log\left(4\cdot\left(n-1\right)^{2}\right)$.
\end{prop}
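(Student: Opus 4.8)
The plan is to reduce the statement to a direct computation in the upper half-plane. First I would set $m\coloneqq n-1$, so that the hypothesis $n\ge3$ becomes $m\ge2$, and note that the geodesic joining the boundary points $-(n-1)/2$ and $(n-1)/2$ is the Euclidean semicircle of radius $m/2$ centred at $0$. Parametrising this semicircle by its polar angle $\theta\in(0,\pi)$, the point at angle $\theta$ has imaginary part $\tfrac m2\sin\theta$, and the hyperbolic arc-length element along it is $\dd s=\dd\theta/\sin\theta$. The portion lying above the line $\{\Im(z)=1/2\}$ is then exactly the sub-arc on which $\tfrac m2\sin\theta\ge\tfrac12$, i.e.\ $\theta\in[\theta_0,\pi-\theta_0]$ with $\theta_0\in(0,\pi/2)$ given by $\sin\theta_0=1/m$; I would observe that this sub-arc is non-degenerate precisely because $m\ge2$, which is where the hypothesis $n\ge3$ is used.

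Next I would compute the length exactly. Using the antiderivative $\int\dd\theta/\sin\theta=\log\tan(\theta/2)$ together with $\tan\big((\pi-\theta_0)/2\big)=1/\tan(\theta_0/2)$, the length equals
\[
L(m)=\int_{\theta_0}^{\pi-\theta_0}\frac{\dd\theta}{\sin\theta}=-2\log\tan(\theta_0/2)=2\log\big(m+\sqrt{m^2-1}\big),
\]
where the final equality follows from the half-angle identity $\tan(\theta_0/2)=\sin\theta_0/(1+\cos\theta_0)=1/\big(m+\sqrt{m^2-1}\big)$ (equivalently, $L(m)=2\operatorname{arccosh}(m)$).

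Finally I would verify the two claimed bounds, which amount to $3m^2\le\big(m+\sqrt{m^2-1}\big)^2\le4m^2$. Expanding gives $\big(m+\sqrt{m^2-1}\big)^2=2m^2-1+2m\sqrt{m^2-1}$; the upper estimate then reduces to $2m\sqrt{m^2-1}\le2m^2+1$, which upon squaring becomes the trivially true inequality $0\le8m^2+1$, while the lower estimate reduces to $2m\sqrt{m^2-1}\ge m^2+1$, i.e.\ (squaring) $3m^4-6m^2-1\ge0$, which holds for all $m\ge2$ since its left-hand side is increasing in $m^2$ on $[1,\infty)$ and equals $23$ at $m=2$. I do not expect a genuine obstacle here, the whole argument being elementary calculus and algebra; the only steps that need a little care are pinning down which sub-arc lies above the horocircle line — this is what forces $n\ge3$ — and checking the lower inequality in the boundary case $n=3$ directly, since there it is not far from being an equality.
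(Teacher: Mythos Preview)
Your proof is correct and follows essentially the same approach as the paper: both parametrise the semicircular arc, integrate the hyperbolic length element to obtain the closed form $\log\big((m+\sqrt{m^{2}-1})^{2}\big)$ with $m=n-1$, and then verify the elementary inequalities $3\le(1+\sqrt{1-1/m^{2}})^{2}\le4$ for $m\ge2$. The only cosmetic difference is that the paper uses the Cartesian parametrisation by the real part, whereas you use the polar angle; the resulting formula and the estimates are identical.
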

\begin{proof}
Let $0<h<r$ and put $a\coloneqq\sqrt{r^{2}-h^{2}}$. We parametrise
the circular arc of the circle with centre $0$ and radius $r$ which
lies above the line $\left\{ z\in\H\mid\Im(z)=h\right\} $. The parametrisation
is given by $\gamma:\left[-a,a\right]\rightarrow\H$, $\gamma(t)\coloneqq\left(t,\sqrt{r^{2}-t^{2}}\right)$.
We have
\[
\gamma'(t)=\left(1,-t(r^{2}-t^{2})^{-1/2}\right)\quad\mbox{and }\Vert\gamma'(t)\Vert_{2}=\sqrt{1+\frac{t^{2}}{r^{2}-t^{2}}}=\frac{r}{\sqrt{r^{2}-t^{2}}}.
\]
Hence, the hyperbolic length of $\gamma$ is given by
\[
l(\gamma)=\int_{-a}^{a}\Vert\gamma'(t)\Vert_{2}\frac{1}{\sqrt{r^{2}-t^{2}}}dt=\frac{1}{r}\int_{-a}^{a}\frac{1}{1-(t/r)^{2}}dt=\int_{-a/r}^{a/r}\frac{1}{1-y^{2}}dy=\log\left(\frac{1+a/r}{1-a/r}\right).
\]
It will be convenient to write
\[
l(\gamma)=\log\left(\frac{1+a/r}{1-a/r}\right)=\log\left(\frac{\left(1+a/r\right)^{2}}{1-\left(a/r\right)^{2}}\right).
\]
To verify the statement in the proposition,  we must consider the case that $h\coloneqq1/2$ and $r\coloneqq(n-1)/2$.
Then $a=\sqrt{n^{2}-2n}/2$ and by our previous calculation we obtain
\[
l\left(\xi\right)=\log\left(\frac{\left(1+\frac{\sqrt{n^{2}-2n}}{n-1}\right)^{2}}{1-\frac{n^{2}-2n}{\left(n-1\right)^{2}}}\right)=\log\left((n-1)^{2}\left(1+\frac{\sqrt{n^{2}-2n}}{n-1}\right)^{2}\right).
\]
To complete the proof, observe that, for all $n\geq3$, we have $4\geq\left(1+\frac{\sqrt{n^{2}-2n}}{n-1}\right)^{2}\ge3.$
\end{proof}
\begin{cor}
\label{cor:winding estimate}For any geodesic arc $\xi_{i}$ corresponding
to a given block $B_{i}$ with $a_{i}\ge2$, we have
\begin{equation}
2\log\left(a_{i}\right)+\log3\leq l\left(\xi_{i}\right).\label{eq:lower_bound_a_i>3}
\end{equation}
If $a_{i}\ge0$ then we still have
\begin{equation}
2\log^{+}(a_{i})+C_{1}\le l(\xi_{i})\le2\log(a_{i}+1)+C_{2},\label{eq:lower_and_upper_bound_a_i>1}
\end{equation}
where $C_{1}\coloneqq\min\left\{ \log3,d\left(s,t\right)\mid s,t\in\mathcal{F},g_{s}g_{t}\not=\mbox{id.}\right\} $
and $C_{2}\coloneqq\max\left\{ \log4,C_{0}\right\}$, where $C_{0}$ as defined in Section \ref{subsec:Horocircles-and-basic}.
\end{cor}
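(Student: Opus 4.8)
The plan is to compare each geodesic arc $\xi_i$ with the model arc studied in Proposition \ref{prop:parabolic_excursion_length}, handling the two geometric regimes (hyperbolic blocks and parabolic blocks) separately, and then packaging the resulting estimates with the correct constants.

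First I would set up the reduction for a parabolic block. By the normalisation discussed in Section \ref{subsec:Horocircles-and-basic}, after conjugating by a suitable element of $\mathrm{PSL}_2(\R)$ we may assume the parabolic generator in the block $B_i$ is $z \mapsto z+1$ with fixed point $\infty$, and that the horocircle $H_j$ used to cut the arc is exactly $\{\Im(z) = 1/2\}$. With $a_i = |B_i|-1$, the portion of $\xi_i$ lying above this horocircle is, by the geometry of the tessellation, a geodesic arc joining two points whose horizontal displacement is essentially $a_i$; the extremal (longest) such configuration is precisely the arc of Proposition \ref{prop:parabolic_excursion_length} with $n-1 = a_i$, i.e. connecting $\pm a_i/2$. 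Hence for $a_i \geq 2$ (so that $n = a_i+1 \geq 3$) the proposition gives that this above-horocircle portion has length at most $\log(4 a_i^2) = 2\log a_i + \log 4$ and — crucially for \eqref{eq:lower_bound_a_i>3} — at least $\log(3 a_i^2) = 2\log a_i + \log 3$. Since the remainder of $\xi_i$ (the part below the horocircle, inside the cut-off compact core) only adds to the length, the lower bound \eqref{eq:lower_bound_a_i>3} follows immediately, and the upper bound needs the below-horocircle part to be controlled by $C_0$, which it is since that part lies in the compact manifold of diameter $C_0$ introduced before \eqref{eq:distance-additive}.

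Next I would treat the remaining cases for \eqref{eq:lower_and_upper_bound_a_i>1}. If $a_i \geq 2$, the lower bound is \eqref{eq:lower_bound_a_i>3} together with $C_1 \leq \log 3$, and the upper bound is $2\log a_i + \log 4 + C_0 \leq 2\log(a_i+1) + C_2$. If $a_i = 1$ the block is parabolic of length $2$; here $\log^+(a_i) = 0$, so the lower bound reads $C_1 \leq l(\xi_i)$, which holds because $\xi_i$ crosses two faces $s, t$ of the fundamental domain with $g_s g_t \neq \mathrm{id}$ and hence has length at least $d(s,t) \geq C_1$; the upper bound $l(\xi_i) \leq 2\log 2 + C_2$ follows from the $n=3$ case of the proposition plus the $C_0$-diameter estimate (or directly, since a length-$2$ parabolic excursion is the shortest one the proposition covers). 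Finally, if $a_i = 0$ the block is a single hyperbolic generator: then $\log^+(a_i) = 0$ and $\log(a_i+1) = 0$, so we must show $C_1 \leq l(\xi_i) \leq C_2$; the lower bound is again the $d(s,t) \geq C_1$ argument for the two sides crossed, and the upper bound follows since $\xi_i$ lies in the compact core and thus $l(\xi_i) \leq C_0 \leq C_2$.

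The main obstacle I anticipate is the first step: justifying rigorously that the above-horocircle portion of a genuine geodesic arc $\xi_i$ associated with a block of the \emph{actual} group $G$ is bounded by the extremal model arc of Proposition \ref{prop:parabolic_excursion_length}. This requires using the freeness of $G$ — which, as noted in Section \ref{subsec:Horocircles-and-basic}, forces the edges of the Dirichlet domain not to touch the relevant vertical lines and hence forces the horocircle height to be at most $1/2$ and the horizontal displacement of the excursion to be at most $a_i$ — so that the monotonicity $a/r \mapsto l(\gamma) = \log\frac{1+a/r}{1-a/r}$ computed in the proof of Proposition \ref{prop:parabolic_excursion_length} can be applied to compare $\xi_i$ with the extremal case. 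Once this geometric comparison is in hand, everything else is bookkeeping with the constants $C_0, C_1, C_2$.
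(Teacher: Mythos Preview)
The paper gives no explicit proof of this corollary, treating it as an immediate consequence of Proposition~\ref{prop:parabolic_excursion_length}; your proposal is a faithful unfolding of that implicit argument and correctly isolates the comparison between the actual excursion and the model arc as the only substantive step.

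One point to sharpen: identifying the model arc as the \emph{longest} configuration for a given $a_i$ only delivers the upper bound on the above--horocircle portion of $l(\xi_i)$ directly; the proposition's two--sided estimate on that single model arc does not by itself bound the actual excursion from below. For the lower bound~\eqref{eq:lower_bound_a_i>3} you need the mirror observation that the \emph{shortest} admissible excursion for a block with $a_i\ge 2$ still has radius at least $a_i/2$ in the normalised picture (a block of length $a_i+1$ forces the semicircle to span at least $a_i$ adjacent translates of the vertical side), whence the proposition's lower estimate with $n-1=a_i$ applies via the monotonicity in $r$ of the formula $l=\log\bigl((r+a)/(r-a)\bigr)$ from the proof of the proposition. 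This is exactly the kind of comparison you flag in your final paragraph, so the ingredient is there; just be careful that the ``longest'' direction and the ``shortest'' direction are two separate comparisons.

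Minor: $a_i=0$ does not force the block to be hyperbolic---a single parabolic letter also gives $|B_i|=1$---but your side--distance argument for $l(\xi_i)\ge C_1$ goes through unchanged in that case.
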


\subsection{Inducing and the topological Markov chain with infinite state space\label{subsec:Inducing-and-the-Gibbs_Markov_Map}}
Let us set  \[\mathcal{D}\coloneqq\left\{ y\in L_{r}(G)\mid a_{1}(y)=0\right\}.\]
The induced transformation $T_{\mathcal{D}}$ on $\mathcal{D}=\{a_{1}=0\}$
is defined by $T_{\mathcal{D}}(\xi)\coloneqq T^{\rho(\xi)}(\xi)$,
where $\rho$ denotes the return time function, given by $\rho(\xi)\coloneqq\min\{n\in\N\mid T^{n}(\xi)\in\mathcal{D}\}$.
Denote by $\Gamma_{0}\coloneqq\left\{ \gamma_{i},\gamma_{i}^{-1}\mid1\le i\le\nu\right\} $
the symmetric set of parabolic generators. Recall that $G_{0}=\Gamma_{0}\cup H_{0}$.
Define the induced partition
\[
\alpha_{\mathcal{D}}\coloneqq\bigcup_{n\in\N}\mathcal{D}\cap\left\{ \rho=n\right\} \cap\bigvee_{k=0}^{n+1}T^{-k}\left(\left\{ b_{s}\cap L_{r}(G)\mid s\in\mathcal{F}\right\} \right)
\]
and the infinite alphabet
\begin{align*}
I & \coloneqq\left\{ g_{1}\gamma^{n}g_{2}\mid\gamma\in\Gamma_{0},g_{1},g_{2}\in G_{0}\setminus\{\gamma^{\pm1}\},n\in\N\right\} \cup\\
 & \quad\quad\cup\left\{ g_{1}hg_{2}\mid g_{1}\in G_{0},h\in H_{0}\setminus\{g_{1}^{-1}\},g_{2}\in G_{0}\setminus\{h^{-1}\}\right\} .
\end{align*}

The incidence matrix $A\in\{0,1\}^{I\times I}$ is for $\eta=\eta_{1}\dots\eta_{n}\in I$
and $\tau=\tau_{1}\dots\tau_{m}\in I$, $n,m\geq3$, given by $A(\eta,\tau)=1$
if and only if $\eta_{n-1}=\tau_{1}$ and $\eta_{n}=\tau_{2}$. We
consider the Markov shift $(\Sigma_{A},\sigma)$ where
\[
\Sigma_{A}\coloneqq\left\{ \omega=(\omega_{1},\omega_{2},\dots)\in I^{\N}\mid\forall k\in\N\,\,\,A(\omega_{k},\omega_{k+1})=1\right\}
\]
and $\sigma\colon\Sigma_{A}\to\Sigma_{A}$, $\left(\sigma\left(\omega\right)\right)_{i}=\omega_{i+1}$
for $\omega\in\Sigma_{A}$ denotes the \emph{left shift map.}

Recall that $\mathcal{D}:=\{a_1=0\}$. The induced Bowen--Series map $(\mathcal{D},T_{\mathcal{D}})$ is conjugated
to the left shift $(\Sigma_{A},\sigma)$ via the coding map $\pi:\mathcal{D}\rightarrow \Sigma_{A}$ which is defined in terms of the infinite word expansion, as given in the introduction, of limit points. Note that $\pi$ is surjective, because any limit point with expansion having first block of length 1 necessarily corresponds to a word starting with a letter in $I$. That is, we have
the following commutative diagram
\[
\xymatrix{\mathcal{D}\ar[r]^{T_{\mathcal{D}}}\ar[d]_{\pi} & \mathcal{D}\ar[d]^{\pi}\\
\Sigma_{A}\ar[r]_{\sigma} & \Sigma_{A}
}
\]

For $\omega\in\Sigma_{A}$ and $n\in\N$ let $\omega|_{n}\coloneqq\left(\omega_{1},\dots,\omega_{n}\right)$
and let $[\omega|_{n}]\coloneqq\left\{ \tau\in\Sigma_{A}\mid\tau_{1}=\omega_{1},\dots,\tau_{n}=\omega_{n}\right\} $
denote the $n$-\emph{cylinder} of $\omega$. We have $\pi(\mathcal{D}) =\left\{ [\omega]\mid\omega\in I \right\}$.
%

We now aim to express the cusp-winding scaling limit in
dynamical terms. For this we introduce the two \emph{potential functions}
\begin{align*}
\psi\colon\Sigma_{A} & \to\R_{0}^{-},\:\left(\omega_{1},\omega_{2},\ldots\right)\mapsto-2\log^{+}(|\omega_{1}|-3),\\
\varphi\colon\Sigma_{A} & \to\R^{-},\:\left(\omega_{1},\omega_{2},\ldots\right)\mapsto-\log\left(|\left(g_{1}\cdots g_{|\omega_{1}|-2}\right)'\left(\pi^{-1}\left(\omega\right)\right)|\right),\\
 & \qquad\qquad\qquad\qquad\hfill\,\mbox{with }\omega_{1}=g_{1}\cdots g_{|\omega_{1}|}\in I,
\end{align*}
where $\psi$ describes the cusp-winding number, while $\varphi$
describes the geometric properties of the geodesic flow. We will equip
$\Sigma_{A}$ with the metric $d$ which is given for each $\omega,\tau\in I^{\N}$
 by $d(\omega,\tau)\coloneqq\exp\left(-|\omega\wedge\tau|\right),$
where $|\omega\wedge\tau|$ denotes the length of the longest common
initial block $\omega\wedge\tau$ of $\omega$ and $\tau$. Since $\psi$ depends only
on the first symbol, we immediately see that $\psi$ is Hölder
continuous with respect to this metric. For the proof that $\varphi$
is also Hölder continuous we refer to \cite{MR2041265}.

\subsection{Topological pressure}

First, let us fix some notation. For two sequences $\left(a_{n}\right)$, $\left(b_{n}\right)$ we will write $a_{n}\ll b_{n}$, if $a_{n}\le Kb_{n}$ for some $K>0$
and all $n\in\N$, and if $a_{n}\ll b_{n}$ and $b_{n}\ll a_{n}$
then we write $a_{n}\asymp b_{n}$.

For $n\in\N$ put
\[
\Sigma_{A}^{n}=\left\{ \omega=(\omega_{1},\dots,\omega_{n})\in I^{n}\mid A(\omega_{i},\omega_{i+1})=1\mbox{ for all }1\le i\le n-1\right\} .
\]
The topological pressure $\mathfrak{P}\left(t\varphi+\beta\psi\right)$
of the potential $t\varphi+\beta\psi$ for $t,\beta\in\R$ is defined
to be
\begin{equation}
\mathfrak{P}\left(t\varphi+\beta\psi\right)\coloneqq\lim_{n\rightarrow\infty}\frac{1}{n}\lg\sum_{\omega\in\Sigma_{A}^{n}}\exp\sup_{\tau\in[\omega]}\left(S_{\omega}\left(t\varphi+\beta\psi\right)\right)(\tau),\label{eq:DynPressure}
\end{equation}
where we set $S_{\omega}f\coloneqq\sup_{x\in[\omega]}S_{n}f\left(x\right)$
with $S_{n}f\left(x\right)\coloneqq\sum_{k=0}^{n-1}f\circ\sigma^{k}\left(x\right)$
and $S_{0}f=0$. By a standard argument involving sub-additivity the
above limit always exists (although it is possibly equal to infinity).

The next lemma shows that the set $\mathcal{F}_{\alpha}$ can be
characterised by the potentials $\varphi$ and $\psi$ and that the
cusp-winding pressure $P$$\left(t,\beta\right)$ agrees with $\mathfrak{P}\left(t\varphi+\beta\psi\right)$
for all $t,\beta\in\R$.
In the proof of the following lemma we make use of the fact that  the topological Markov chain $(\Sigma_{A},\sigma)$ is  \emph{finitely primitive}, that is,  there exists $\ell\in \N$ and a finite set $F\subset \Sigma_A^{\ell}$  such that for all $a,b\in I$ there exists $u\in  \Sigma_A^{\ell}$ such that the word $aub$ belongs to $  \Sigma_A^{\ell+2}$, for more details, see \cite{MR2003772}. Here the finite set $F$ can be constructed due to the fact that there are only finitely many generators of $G$.
\begin{lem}
\label{lem:pressure is pressure} For $\alpha\in\R$, $x\in \mathcal{D}$
and $\omega\coloneqq\pi(x)\in\Sigma_{A}$ we have
\[
\lim_{n\rightarrow\infty}\frac{S_{n}\psi\left(\omega\right)}{S_{n}\varphi\left(\omega\right)}=\alpha\iff\lim_{n\rightarrow\infty}\frac{2\sum_{i=1}^{n}\log(a_{i}(x))}{d\left(B_{1}\dots B_{n}\left(i\right),i\right)}=\alpha,
\]
and for all $t,\beta\in\R$ we have
\[
P\left(t,\beta\right)=\mathfrak{P}\left(t\varphi+\beta\psi\right).
\]
Moreover, $\mathfrak{P}\left(t\varphi+\beta\psi\right)<\infty$ if
and only if $t+\beta>1/2$. Further, for each $\beta\in\R$ we have
$\lim_{t\searrow\frac{1}{2}-\beta}P\left(t,\beta\right)=\infty$ and
$\lim_{t\to\infty}P\left(t,\beta\right)=-\infty$.
\end{lem}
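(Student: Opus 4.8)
The plan is to treat the three assertions in turn, each reducing to the geometric estimates of Section \ref{subsec:Horocircles-and-basic} together with standard manipulations of Birkhoff sums.

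First I would establish the equivalence of the two scaling limits. The point is that for $x\in\mathcal{D}$ with $\omega=\pi(x)$, the Birkhoff sum $S_n\psi(\omega)$ and the quantity $2\sum_{i=1}^n\log^+(a_i(x))$ differ only by a bounded-per-block amount: by the definition of $\psi$ and the coding, $-S_n\psi(\omega)=2\sum_{i=1}^n\log^+(|B_i|-3)$, and since $a_i=|B_i|-1$ we have $|2\log^+(a_i)-2\log^+(a_i-2)|$ bounded whenever $a_i\ge 3$ and the contribution is $O(1)$ for the finitely many small values $a_i\in\{0,1,2\}$; hence $|{-S_n\psi(\omega)}-2\sum_{i=1}^n\log^+(a_i(x))|\le c\cdot\#\{i\le n: a_i\ge 1\}+cn$, which is $O(n)$. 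Similarly, by the cocycle property of the derivative and the Hölder continuity of $\varphi$ established in \cite{MR2041265}, $-S_n\varphi(\omega)$ equals $\log|(B_1\cdots B_n)'(\cdot)|$ up to a bounded error, and by the standard identification of hyperbolic distance with the logarithmic derivative along the geodesic (equivalently, via (\ref{eq:distance-additive}) and Corollary \ref{cor:winding estimate}), $-S_n\varphi(\omega)=d(B_1\cdots B_n(i),i)+O(n)$. The key observation that makes the equivalence of the \emph{limits} (rather than just comparability) work is that $S_n\varphi(\omega)\to-\infty$ at a rate that dominates $n$: indeed each arc $\xi_k$ has length bounded below by $C_1>0$ by Corollary \ref{cor:winding estimate}, so $d(B_1\cdots B_n(i),i)\ge C_1 n$, whence dividing numerator and denominator by $-S_n\varphi(\omega)$ the $O(n)$ error terms wash out. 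This gives the claimed $\iff$.

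Next, for the identity $P(t,\beta)=\mathfrak{P}(t\varphi+\beta\psi)$, I would compare the two defining sums term by term over the index sets $\G_n$ and $\Sigma_A^n$. The coding $\pi$ sets up a bijection between $\G_n$ (blocks $B_1\cdots B_n$ realised by limit points in $\mathcal{D}$) and $\Sigma_A^n$, under which $d(B_1\cdots B_n(i),i)$ corresponds to $-S_n\varphi$ up to a bounded additive error (uniform in $n$, by the estimates above and the bounded distortion of the $g_s$) and $2\sum_{k=1}^n\log^+(a_k)$ corresponds to $-S_n\psi$ up to a per-block $O(1)$ error. The per-block errors could a priori accumulate linearly, contributing a bounded \emph{multiplicative} factor $e^{O(n)}$ to each partition sum; this is harmless after taking $\frac1n\log$ only if the constants are uniform, which they are because $G$ has finitely many generators. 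Here I must also be slightly careful that $P(t,\beta)$ was defined with a sum over all of $L_r(G)$ whereas $\mathfrak{P}$ is the induced pressure on $\mathcal{D}$; but since any block word $B_1\cdots B_n$ has $B_1$ beginning with some generator and the induced alphabet $I$ records exactly the transitions between blocks, the restriction to $\mathcal{D}=\{a_1=0\}$ only reshuffles finitely many boundary blocks and changes the sum by a bounded factor, using finite primitivity of $(\Sigma_A,\sigma)$ to absorb the endpoint adjustments. Taking $\frac1n\log$ and passing to the limit yields equality; the existence of both limits is guaranteed by the (almost) sub-additivity already noted in the introduction, respectively by the standard argument recalled after (\ref{eq:DynPressure}).

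Finally, for the finiteness criterion I would analyse the series $\sum_{\omega_1\in I}\exp\sup_{[\omega_1]}(t\varphi+\beta\psi)$ restricted to the "parabolic" letters $g_1\gamma^n g_2$, since the "hyperbolic" letters form a finite subalphabet and contribute only finitely many terms. For such a letter with $a=n$ large, Corollary \ref{cor:winding estimate} gives $\varphi\asymp -2\log n$ up to $O(1)$ and $\psi=-2\log^+(n-3)\asymp -2\log n$, so the term is $\asymp n^{-2(t+\beta)}$; summing over $n\in\N$ and over the finitely many choices of $\gamma,g_1,g_2$ shows the one-step pressure sum is finite iff $t+\beta>1/2$. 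Finiteness of the full pressure then follows from finite primitivity, which lets one bound $n$-step sums by $n$-th powers of the one-step sum times a bounded combinatorial factor (this is the standard argument in \cite{MR2003772}); conversely if $t+\beta\le1/2$ a single coordinate already makes the sum diverge. For the two boundary limits: as $t\searrow\frac12-\beta$ the one-step parabolic sum behaves like $\sum_n n^{-2(t+\beta)}$, which diverges to $+\infty$, and since pressure dominates $\frac1n\log$ of the $n$-step sum $\ge\frac1n\log$ of (one-step sum)$^{?}$ — more precisely, using a fixed connecting word from finite primitivity, $\mathfrak{P}\ge \log(\text{one-step parabolic sum})-O(1)$ — we get $P(t,\beta)\to\infty$; and as $t\to\infty$ every term $\exp(S_n(t\varphi+\beta\psi))\le \exp(-tC_1 n+|\beta|\cdot O(n))\to 0$ geometrically, forcing $P(t,\beta)\to-\infty$.

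The main obstacle I expect is bookkeeping the additive $O(n)$ / multiplicative $e^{O(n)}$ discrepancies between the "group-geometric" quantities ($d(B_1\cdots B_n(i),i)$, $2\sum\log^+ a_k$) and the "symbolic" Birkhoff sums ($-S_n\varphi$, $-S_n\psi$) with constants that are genuinely uniform in $n$ and in the word — this is exactly where the finite number of generators and the bounded distortion of the Bowen--Series branches must be invoked carefully, and where the lower bound $l(\xi_k)\ge C_1$ from Corollary \ref{cor:winding estimate} is essential so that these errors are negligible relative to the growing denominator. The finiteness-threshold computation and the two limiting statements are then comparatively routine consequences of the $n^{-2(t+\beta)}$ tail behaviour combined with finite primitivity.
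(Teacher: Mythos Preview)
Your treatment of the finiteness threshold and the two limits $t\to\infty$, $t\searrow\frac12-\beta$ is essentially the paper's: reduce to the one-letter sum, identify the tail $\sum_n n^{-2(t+\beta)}$ via Corollary~\ref{cor:winding estimate}, and invoke finite primitivity (the paper cites \cite[Proposition~2.1.9]{MR2003772}).

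There is, however, a genuine gap in your handling of the first two assertions. You only claim $O(n)$ discrepancies between $-S_n\varphi(\omega)$ and $d(B_1\cdots B_n(i),i)$, and between $-S_n\psi(\omega)$ and $2\sum_{k=1}^n\log^+(a_k)$, and then assert that dividing by $-S_n\varphi\ge C_1 n$ makes these errors ``wash out''. This is false: for $x\in L_c$ (all $a_k\le 1$) the denominator $d(B_1\cdots B_n(i),i)$ is itself $\Theta(n)$, so an $O(n)$ additive error in either numerator or denominator shifts the ratio by an amount that is $O(1)$, not $o(1)$, and the two limits need not agree. The same defect breaks your argument for $P(t,\beta)=\mathfrak{P}(t\varphi+\beta\psi)$: an $e^{O(n)}$ multiplicative error between the two partition sums survives $\frac1n\log$ as an $O(1)$ additive error, so you would only get $|P-\mathfrak{P}|\le c$, not equality. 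Your remark that this is ``harmless \dots\ if the constants are uniform'' is exactly backwards.

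The fix is that the errors are in fact $O(1)$, not $O(n)$. The paper simply quotes from \cite{MR2041265} the uniform estimate
\[
\bigl|\,d(B_1\cdots B_n(i),i)+S_n\varphi(\omega)\,\bigr|\le D
\]
(this is the standard comparison of $\log|g'(\xi)|$ with $d(g(i),i)$ for Möbius $g$, with $\xi$ confined to a cylinder bounded away from the pole; bounded distortion gives a constant independent of $n$). On the $\psi$ side, the coding is arranged so that $|\omega_k|-3=a_{k'}$ for the block $B_{k'}$ recorded by $\omega_k$, whence $-S_n\psi(\omega)=2\sum_{k'}\log^+(a_{k'})$ \emph{exactly}, with at most a harmless index shift. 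With these two $O(1)$ facts, the equivalence of limits is immediate (bounded error over a denominator $\ge C_1 n\to\infty$), and for the pressures the two $n$-step partition sums differ by the $n$-independent factor $e^{\pm|t|D}$, which disappears after $\frac1n\log$.
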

\begin{proof}
Following \cite{MR2041265}, we find a constant $D>0$ such that for
all $x\in L_{r}\left(G\right)$, $\omega\coloneqq\pi(x)\in\Sigma_{A}$
and $n\in\N$,
\begin{equation}
\left|d\left(B_{1}\dots B_{n}\left(i\right),i\right)-S_{n}\varphi\left(\omega\right)\right|\le D.\label{eq:4}
\end{equation}
 This proves the first two assertions.

Since  $(\Sigma_{A},\sigma)$ is finitely
primitive, it follows from  \cite[Proposition 2.1.9]{MR2003772}
that
\[
\mathfrak{P}\left(t\varphi+\beta\psi\right)<\infty\quad\mbox{if and only if }\quad\sum_{a\in I}\e^{\sup_{x\in\left[a\right]}t\varphi\left(x\right)+\beta\psi\left(x\right)}<\infty.
\]
Now, we deduce with (\ref{eq:4}), (\ref{eq:distance-additive}) and
(\ref{eq:lower_and_upper_bound_a_i>1}) that for all $t,\beta\in\mathbb{R}$
\begin{equation}
\sum_{a\in I}\e^{\sup_{x\in\left[a\right]}t\varphi\left(x\right)+\beta\psi\left(x\right)}\asymp\sum_{n\ge1}\e^{-2t\log\left(n\right)-2\beta\log(n)}\asymp\sum_{n\ge1}n^{-2(\beta+t)}.\label{eq:PressureDiverging}
\end{equation}
which converges if and only if $t+\beta>1/2$. This also shows $\lim_{t\searrow\frac{1}{2}-\beta}P\left(t,\beta\right)=\infty$.

Finally, by (\ref{eq:Subadditive}) and Corollary \ref{cor:winding estimate}
(with the constant $C_{1}$ as defined in Corollary \ref{cor:winding estimate}),
we have for fixed $n>C_{1}/(2C_{0})$ and $t>0$,

\begin{align*}
P\left(t,\beta\right) & \leq\frac{1}{n}\log\sum_{B_{1}\cdots B_{n}\in\G_{n}}\e^{-td\left(B_{1}\cdots B_{n}\left(i\right),i\right)-2\beta\sum_{i=1}^{n}\log^{+}(|B_{i}|-1)}+\frac{2tC_{0}}{n}\\
 & \leq\frac{1}{n}\log\sum_{B_{1}\cdots B_{n}\in\G_{n}}\e^{-t\sum_{i=1}^{n}\left(2\log^{+}(a_{i})+C_{1}\right)-2\beta\sum_{i=1}^{n}\log^{+}(a_{i})}+\frac{2tC_{0}}{n}\\
 & \leq t\underbrace{\left(\frac{2C_{0}}{n}-C_{1}\right)}_{<0}+\log\left(2\nu\sum_{k=1}^{\infty}k^{-2\left(t+\beta\right)}+2u\right)\to-\infty\,\,\mbox{for }t\to\infty.
\end{align*}
\end{proof}

\subsection{The cusp-winding  free energy }

The following proposition shows the existence of the cusp-winding
free energy function and some of its basic properties.
\begin{prop}
\label{pro:exTf(beta)}For each $\beta\in\R$ there exists a unique
number $t\left(\beta\right)$ such that
\begin{equation}
P\left(t\left(\beta\right),\beta\right)=0.\label{eq:pressure equation in gauss}
\end{equation}
 The cusp-winding free energy function $t:\mathbb{R}\to\mathbb{R}$
defined in this way is real-analytic and strictly convex.
\end{prop}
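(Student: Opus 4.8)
The plan is to read everything off from the analytic theory of the topological pressure on the finitely primitive countable Markov shift $(\Sigma_{A},\sigma)$, together with the structural facts already recorded in Lemma~\ref{lem:pressure is pressure}. Recall from that lemma that $P(t,\beta)=\mathfrak{P}(t\varphi+\beta\psi)$, that this is finite precisely on the open half–plane $\Omega\coloneqq\{(t,\beta)\in\R^{2}\mid t+\beta>1/2\}$, that $P(\cdot,\beta)\to\infty$ as $t\searrow\tfrac12-\beta$, and that $P(\cdot,\beta)\to-\infty$ as $t\to\infty$. On $\Omega$ the potential $t\varphi+\beta\psi$ is a summable Hölder potential — this is exactly what makes the series in \eqref{eq:PressureDiverging} converge — so, $(\Sigma_{A},\sigma)$ being finitely primitive, the associated Ruelle operator has a spectral gap depending analytically on $(t,\beta)$; hence $(t,\beta)\mapsto P(t,\beta)$ is real-analytic on $\Omega$, and it is convex there, being the pressure of a family of potentials depending affinely on $(t,\beta)$. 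All of this is standard (see \cite[\S2.6]{MR2003772}) and is also contained in the thermodynamic formalism we develop later for Theorem~\ref{thm:mf}. Finally, for $(t,\beta)\in\Omega$ one has $\partial_{t}P(t,\beta)=\int\varphi\,\mathrm{d}\mu_{t,\beta}$, where $\mu_{t,\beta}$ is the equilibrium state of $t\varphi+\beta\psi$; since $\varphi$ has constant sign this derivative never vanishes on $\Omega$, so $P(\cdot,\beta)$ is strictly monotone.

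Existence, uniqueness and real-analyticity then follow at once. Fix $\beta\in\R$: the map $t\mapsto P(t,\beta)$ is continuous and strictly monotone on $(\tfrac12-\beta,\infty)$ with limits $\infty$ and $-\infty$ at the two endpoints, hence it has exactly one zero $t(\beta)$, and $(t(\beta),\beta)\in\Omega$. Since $\partial_{t}P(t(\beta),\beta)\ne0$, the analytic implicit function theorem applied to $P$ on $\Omega$ shows that $\beta\mapsto t(\beta)$ is real-analytic, with $t'(\beta)=-\partial_{\beta}P(t(\beta),\beta)/\partial_{t}P(t(\beta),\beta)$.

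For convexity, note that since $P(\cdot,\beta)$ is strictly decreasing in $t$ one has $P(t,\beta)\le 0$ if and only if $t\ge t(\beta)$; thus the epigraph $\{(t,\beta)\mid t\ge t(\beta)\}$ coincides with the sublevel set $\{(t,\beta)\in\Omega\mid P(t,\beta)\le 0\}$, which is convex because $P$ is convex. Hence $t$ is convex, and it remains only to show it is not affine on any non-degenerate interval. Suppose it were, say $t(\beta)=a\beta+b$ for $\beta$ in an interval $J$. Then $Q(\beta)\coloneqq P(a\beta+b,\beta)=\mathfrak{P}\bigl(b\varphi+\beta(a\varphi+\psi)\bigr)$ is a real-analytic convex function vanishing on $J$, hence vanishing identically, so $Q''\equiv 0$. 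But $Q''(\beta)$ is the asymptotic variance of $g\coloneqq a\varphi+\psi$ with respect to the equilibrium state of $b\varphi+\beta g$, and a vanishing variance forces, by a Livšic-type argument, $g$ to be cohomologous to a constant: $a\varphi+\psi=c+u\circ\sigma-u$ for some $c\in\R$ and a Hölder function $u$.

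Ruling out such a cohomological relation is the crux, and the step I expect to cost the most work. Evaluating the identity over a $\sigma$-periodic point $\omega$ of period $p$ gives $\tfrac1p\bigl(a\,S_{p}\varphi(\omega)+S_{p}\psi(\omega)\bigr)=c$, so this average is the same for every periodic orbit. Comparing two periodic orbits that differ only by replacing a single occurrence of a symbol $g_{1}\gamma^{2}g_{2}\in I$ by $g_{1}\gamma^{m}g_{2}$ — admissible because, by the overlap structure of $A$, the letters flanking a parabolic power in an admissible word do not depend on its exponent — one finds, using Corollary~\ref{cor:winding estimate} and \eqref{eq:4}, that along these two orbits the period $p$ is unchanged while both $S_{p}\psi$ and $S_{p}\varphi$ change by $-2\log m+O(1)$; letting $m\to\infty$ forces $a=-1$. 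It then remains to exclude $\psi-\varphi=c+u\circ\sigma-u$, and for this one restricts to periodic orbits along which $\psi\equiv 0$, namely those coded only by symbols with $|\omega_{1}|=3$, i.e.\ by single-letter blocks; these form a subshift of positive topological entropy because $u+v>1$, and on such orbits the identity forces $S_{p}\varphi(\omega)/p$ to equal the single constant $-c$ for all of them. This contradicts the classical fact that the geometric potential $\varphi$ is not cohomologous to a constant on this subsystem — equivalently, that the Lyapunov (length) spectrum of the geodesic flow on the corresponding sub-surface is non-degenerate. The contradiction shows that $t$ is strictly convex, completing the proof.
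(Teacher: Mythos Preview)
Your existence, uniqueness and real-analyticity arguments coincide with the paper's: both combine Lemma~\ref{lem:pressure is pressure} with the analyticity of pressure on $\Omega$ and an implicit-function argument (the paper cites \cite[Theorem~2.6.12, Proposition~2.6.13]{MR2003772} directly rather than writing out the spectral-gap statement, but the content is the same).

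Where you diverge is in the proof of \emph{strict} convexity. The paper argues as follows: since $t$ is convex and real-analytic, failure of strict convexity forces $t''\equiv 0$, i.e.\ $t'$ is a constant; this is then ruled out by the asymptotics of $t$ at $\pm\infty$, established in the two lemmas immediately following the proposition. Namely, $\lim_{\beta\to\infty}t(\beta)=\delta_{c}$ (so the slope tends to $0$) while $\tfrac12-\beta<t(\beta)<\tfrac12-\beta+\tfrac\epsilon2$ for all sufficiently negative $\beta$ (so the slope tends to $-1$). Hence $t'$ cannot be constant. Your route via asymptotic variance, Liv\v{s}ic theory and a periodic-orbit comparison is correct, and your computation forcing $a=-1$ is clean; but the final step rests on $\varphi$ not being cohomologous to a constant on the finite ``short-block'' subsystem, which---while true for a non-elementary Fuchsian group---is an external input that you do not prove. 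The paper's approach avoids this entirely, is more elementary, and has the advantage that the two asymptotic lemmas are needed anyway to determine the boundary values $f(0)=\delta_{c}$ and $f(1)=\tfrac12$ in Theorem~\ref{main}; so nothing is wasted.
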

\begin{proof}
We have to verify that the function  $(t,\beta)\mapsto P(t,\beta)$ is real-analytic
on the set $\left\{ (t,\beta)\in\R^{2}\mid2(t+\beta)>1\right\} $. Recall
that $\Sigma_{A}$ is a finitely primitive Markov shift. Let $(t,\beta)\in\R^{2}$
with $2(t+\beta)>1$. By \cite[Proposition 2.6.13]{MR2003772} it
suffices to show that $\varphi,\psi\in L^1(\Sigma_{A},\mu_{t\varphi+\beta\psi})$, where $L^1(\Sigma_{A},\mu_{t\varphi+\beta\psi})$ denotes the space of $\mu_{t\varphi+\beta\psi}$-integrable functions on $\Sigma_{A}$.
Here, $\mu_{t\varphi+\beta\psi}$ denotes the unique equilibrium state
of $t\varphi+\beta\psi$ for the dynamical system $(\Sigma_{A},\sigma)$,
that is, $\mu_{t\varphi+\beta\psi}$ is the unique $\sigma$-invariant
Borel probability measure on $\Sigma_{A}$ such that $h(\mu_{t\varphi+\beta\psi})+\int t\varphi+\beta\psi\,\,d\mu_{t\varphi+\beta\psi}=0$,
where $h(\mu_{t\varphi+\beta\psi})$ refers to the metric entropy
of the measure-theoretic dynamical system $(\Sigma_{A},\sigma,\mu_{t\varphi+\beta\psi})$.
It is well known that $\mu_{t\varphi+\beta\psi}$ is a Gibbs measure
with respect to $t\varphi+\beta\psi$. In particular, using the estimate
in (\ref{eq:PressureDiverging}), we have that $\mu_{t\varphi+\beta\psi}([\omega_{1}])\ll\exp\left(-2t\log\left|\omega_{1}\right|-2\beta\log\left|\omega_{1}\right|\right)$
for all $\omega_{1}\in I$. Since $2(t+\beta)>1$, it follows that
$\varphi,\psi\in L^1(\Sigma_{A},\mu_{t\varphi+\beta\psi})$.
By \cite[Theorem 2.6.12]{MR2003772} we know that the pressure $P$
is real-analytic on $\left\{ \left(t,\beta\right)\in\R^{2}\mid P\left(t,\beta\right)<\infty\right\} $,
which by Lemma \ref{lem:pressure is pressure} is equal to $\left\{ (t,\beta)\in\R^{2}\mid2(t+\beta)>1\right\} $.

Now let $\beta\in\R$. By Lemma \ref{lem:pressure is pressure} we
have that $P\left(t,\beta\right)<\infty$, if and only if $t>1/2-\beta$.
Also by Lemma \ref{lem:pressure is pressure}, we have $\lim_{t\searrow\frac{1}{2}-\beta}P\left(t,\beta\right)=\infty$
and $\lim_{t\to\infty}P\left(t,\beta\right)=-\infty$. Hence, we conclude
that there exists a unique $t=t\left(\beta\right)$ with $P\left(t\left(\beta\right),\beta\right)=0$.

The strict convexity of $t$ follows from convexity and real-analyticity,
unless $t'$ is constant. That $t'$ is not constant follows from
the asymptotic behaviour of $t$ for $\beta\to\pm\infty$ as described
in the following Lemmas \ref{lem:Asymp_t_right} and \ref{lem:tAsymptoticFor-right}.
\end{proof}
\begin{lem}
\label{lem:Asymp_t_right}For the right asymptotic of $t$ we have that
$\lim_{\beta\to\infty}t\left(\beta\right)=\delta_{c}$.
\end{lem}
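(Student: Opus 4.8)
The plan is to determine the limiting behaviour of the free energy function as $\beta\to\infty$ by exploiting the defining equation $P(t(\beta),\beta)=0$ together with the explicit dependence of the pressure on $\beta$ coming from the potential $\psi$. The key observation is that $\psi$ only records the cusp-winding number, so that sending $\beta\to\infty$ should suppress the contribution of all words containing a genuine parabolic excursion (i.e.\ with $a_i\ge2$), and in the limit the pressure equation should reduce to the pressure equation of the induced system restricted to the letters with $\psi\equiv 0$, whose root is exactly $\delta_c=\dim_{\mathrm H}(L_c)$.

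First I would record monotonicity: since $\psi\le 0$, the map $\beta\mapsto P(t,\beta)$ is non-increasing for fixed $t$, hence $\beta\mapsto t(\beta)$ is non-increasing and, being bounded below (the pressure is finite only for $t>1/2-\beta$, but more usefully the pressure at $t$ slightly above the $\delta_c$-value stays positive), the limit $t_\infty:=\lim_{\beta\to\infty}t(\beta)$ exists. Next I would identify $t_\infty$ from below and above. For the lower bound $t_\infty\ge\delta_c$: restricting the sum defining $\mathfrak P(t\varphi+\beta\psi)$ to words all of whose symbols $\omega_j$ satisfy $|\omega_j|=3$ (equivalently $a_j\le1$, so that $\psi\equiv0$ on the corresponding cylinders) gives a lower bound for the pressure that is independent of $\beta$; this restricted pressure is precisely the topological pressure of $t\varphi$ on the subshift coding $L_c$, whose root is $\delta_c$ by Fact~\ref{fact2} together with the standard Bowen-type formula for the Hausdorff dimension of the limit set of $\langle H_0\rangle$-type subsystems. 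Since for $t<\delta_c$ this restricted pressure is strictly positive, we get $P(t,\beta)>0$ for all $\beta$, hence $t(\beta)>t$, hence $t_\infty\ge\delta_c$.

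For the reverse inequality $t_\infty\le\delta_c$, fix $t>\delta_c$ and I would show $P(t,\beta)\to -\infty$, or at least $P(t,\beta)<0$, as $\beta\to\infty$. Split the alphabet $I=I_{\mathrm{fin}}\cup I_{\mathrm{par}}$ into letters with $a\le1$ (on which $\psi=0$) and letters with $a\ge2$. On cylinders over $I_{\mathrm{par}}$ one has $\sup\psi\le -2\log^+(|\omega_1|-3)$ which is strictly negative and, by Corollary~\ref{cor:winding estimate}, is comparable to $-l(\xi_i)$, so the $\beta\psi$-factor decays like a positive power of the block length times $\beta$; by the $\asymp$-estimate \eqref{eq:PressureDiverging} the total contribution of such letters to $\sum_{a\in I}\e^{\sup_{[a]}(t\varphi+\beta\psi)}$ is dominated by $\sum_{n\ge2} n^{-2(t+\beta)}\to 0$ as $\beta\to\infty$. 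Using finite primitivity and \cite[Proposition 2.1.9]{MR2003772} (monotonicity of the pressure of a finitely primitive shift in the generating function), the pressure $P(t,\beta)$ is then squeezed between the pressure of $t\varphi$ on the $I_{\mathrm{fin}}$-subsystem (which equals $P(t,\cdot)$ evaluated at the ``$\beta=\infty$ system'') plus a term tending to $0$; for $t>\delta_c$ that limiting pressure is strictly negative, so $P(t,\beta)<0$ for $\beta$ large, whence $t(\beta)<t$ and $t_\infty\le\delta_c$. Combining the two inequalities gives $t_\infty=\delta_c$.

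The main obstacle is making the ``$\beta\to\infty$ system = $I_{\mathrm{fin}}$-subsystem'' comparison fully rigorous at the level of the pressure rather than just the partition functions: one must control uniformly (in $\beta$) the $n$-step partition sums, not just the one-step sums, to pass the limit through the $\lim_n \tfrac1n\log$. I would handle this by the sub-additivity bound \eqref{eq:Subadditive} together with finite primitivity, which allows the $n$-step sum to be estimated by products of one-step sums up to the uniform multiplicative error coming from the finite connecting set $F$; since the one-step sums converge as $\beta\to\infty$ (monotonically, by $\psi\le0$) to the one-step sums of the $I_{\mathrm{fin}}$-system, a dominated-convergence argument at the level of $\frac1n\log$ of the $n$-step sums, uniform in $n$, yields $\lim_{\beta\to\infty}P(t,\beta)=P_{\mathrm{fin}}(t)$, the pressure of the subsystem coding $L_c$. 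Continuity of $t\mapsto P_{\mathrm{fin}}(t)$ and the identification $P_{\mathrm{fin}}(\delta_c)=0$ then close the argument.
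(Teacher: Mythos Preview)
Your strategy is correct and matches the paper's: bound $t(\beta)$ from below by restricting the pressure sum to words on which $\psi\equiv 0$ (the subsystem coding $L_c$, whose Bowen root is $\delta_c$), and from above by showing $P(\delta_c+\epsilon,\beta)<0$ for large $\beta$ because the contribution of long parabolic blocks dies. One small imprecision: the set $\{\psi=0\}$ in the induced alphabet is $\{|\omega_1|\le 4\}$, not $\{|\omega_1|=3\}$; your identification ``$|\omega_j|=3\Leftrightarrow a_j\le 1$'' misses the letters $g_1\gamma^2 g_2$, so the subsystem you actually wrote down is strictly smaller than $L_c$ and would only give $t_\infty\ge \dim_{\mathrm H}(\{a_k\equiv 0\})$. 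Replace ``$|\omega_j|=3$'' by ``$\psi(\omega_j)=0$'' (equivalently $|\omega_j|\le 4$) and the lower bound goes through exactly as you say, via Bishop--Jones, which is precisely what the paper does.

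For the upper bound the paper takes a more explicit combinatorial route than yours: it fixes $t=\delta_c+\epsilon$, decomposes each $n$-word in $\mathcal G_n$ according to which of its $n$ blocks satisfy $|B_i|\le 2$ versus $|B_i|\ge 3$, uses the triangle inequality to separate the two types (incurring a bounded multiplicative error per long block), and bounds the resulting sum by $\sum_{k=0}^n \binom{n}{k}\e^{kp_k(\delta_c+\epsilon)}\bigl(2\nu\e^{3(\delta_c+\epsilon)C}(\zeta(2\beta)-1)\bigr)^{n-k}$; taking $n\to\infty$ via the elementary limit $\bigl(\sum_k\binom{n}{k}\lambda_k^k b^{n-k}\bigr)^{1/n}\to\lambda+b$ yields $\log\bigl(\e^{p(\delta_c+\epsilon)}+o_\beta(1)\bigr)<0$ for $\beta$ large. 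Your route---use the sub-additivity bound \eqref{eq:Subadditive} at a fixed level $k$, let $\beta\to\infty$ so that $Z_k\to Z_k^{\mathrm{fin}}$ by monotone convergence of the partition sum, then let $k\to\infty$---is a legitimate alternative and arguably cleaner, since it avoids the binomial bookkeeping. The one step to be careful with is that your appeal to \cite[Proposition~2.1.9]{MR2003772} does not give a monotonicity of pressure in the generating function; that proposition only characterises finiteness. But you do not actually need it: the sub-additivity inequality you invoke afterwards already yields $\limsup_{\beta\to\infty}P(t,\beta)\le p(t)$ directly, and the restriction inequality gives the matching lower bound, so $\lim_{\beta\to\infty}P(t,\beta)=p(t)$ as you claim.
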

\begin{proof}
For $s\in\mathbb{R}$ and $n\in\N$, set
\[
p_{n}\left(s\right)\coloneqq\frac{1}{n}\log\sum_{{B_{1}\cdots B_{n}\in\G_{n}\atop \left|B_{i}\right|\leq2}}\e^{-s d\left(B_{1}\cdots B_{n}i,i\right)}\,\,\mbox{and }p\left(s\right)\coloneqq\lim_{n\rightarrow \infty}p_{n}\left(s\right).
\]
Then for all $\beta>0$, we have
\begin{align*}
0 & =\lim_{n\to\infty}\frac{1}{n}\log\sum_{B_{1}\cdots B_{n}\in\G_{n}}\e^{-t\left(\beta\right)d\left(B_{1}\cdots B_{n}i,i\right)-2\beta\sum\log^{+}\left(a_{i}\right)}
\geq  \lim_{n\to\infty} p_{n}(t(\beta)) 
=p\left(t\left(\beta\right)\right).
\end{align*}
 By the formula of Bishop and Jones (\cite{MR1484767}) it follows that $t\left(\beta\right)\geq\delta_{c}$.

Now, let $C\coloneqq2C_{0}$ with $C_{0}$ as in Section \ref{subsec:Horocircles-and-basic},
 and recall that the constant $\nu$
denotes the number of cusps. Let $\zeta$ denote the Riemann zeta-function
and choose $\epsilon>0$. Then by using the triangle inequality $3\left(n-k\right)$
times corresponding to blocks of length greater than 4, we obtain
\begin{align*}
 & \lefteqn{\frac{1}{n}\log\sum_{B_{1}\cdots B_{n}\in\G_{n}}\e^{-\left(\delta_{c}+\epsilon\right)d\left(B_{1}\cdots B_{n}i,i\right)-2\beta\sum\log^{+}\left(a_{i}\right)}}\\
 & \leq\frac{1}{n}\log\sum_{k=0}^{n}\binom{n}{k}\e^{kp_{k}\left(\delta_{c}+\epsilon\right)}\e^{3\left(\delta_{c}+\epsilon\right)C\left(n-k\right)}\sum_{{\left(B_{1},\ldots,B_{n-k}\right)\in\G_{1}^{n-k}\atop \left|B_{i}\right|\geq3}}\e^{\sum_{\ell=1}^{n-k}-\left(\delta_{c}+\epsilon\right)d\left(B_{\ell}i,i\right)-2\beta\log^{+}\left(a_{i}\right)}\\
 & \leq\frac{1}{n}\log\sum_{k=0}^{n}\binom{n}{k}\e^{kp_{k}\left(\delta_{c}+\epsilon\right)}\e^{3\left(\delta_{c}+\epsilon\right)C\left(n-k\right)}\sum_{{\left(B_{1},\ldots,B_{n-k}\right)\in\G_{1}^{n-k}\atop \left|B_{i}\right|\geq3}}\prod_{i=1}^{n-k}a_{i}^{-2\beta}\\
 & \leq\frac{1}{n}\log\sum_{k=0}^{n}\binom{n}{k}\e^{kp_{k}\left(\delta_{c}+\epsilon\right)}\e^{3\left(\delta_{c}+\epsilon\right)C\left(n-k\right)}\left(2\nu\left(\zeta\left(2\beta\right)-1\right)\right)^{n-k}\\
 & \stackrel{n\to\infty}{\longrightarrow}\quad\log\bigg(\underbrace{\e^{p\left(\delta_{c}+\epsilon\right)}}_{<1}+\underbrace{2\e^{3\left(\delta_{c}+\epsilon\right)C}\nu\left(\zeta\left(2\beta\right)-1\right)}_{\stackrel{\beta\to\infty}{\longrightarrow}0}\bigg)<0
\end{align*}
for $\beta$ large enough. Here we used the general observation that
for $\lambda_{k}\to\lambda>0$ and $b>0$ we have
\[
\lim_{n\to\infty}\left(\sum_{k=0}^{n}{n \choose k}\lambda_{k}^{k}b^{n-k}\right)^{1/n}=\lambda+b.
\]
This shows that for all sufficiently large $\beta$ we have $\delta_{c}\leq t\left(\beta\right)\leq\delta_{c}+\epsilon.$
Since $\epsilon>0$ was arbitrary, it follows that $\lim_{\beta\to\infty}t\left(\beta\right)=\delta_{c}$.
\end{proof}
\begin{lem}
\label{lem:tAsymptoticFor-right}For all $\epsilon\in\left(0,1/2\right)$
we find that for all $-\beta$ large enough we have
\[
\frac{1}{2}-\beta<t\left(\beta\right)<\frac{1}{2}-\beta+\frac{\epsilon}{2}.
\]
\end{lem}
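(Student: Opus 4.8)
The plan is to sandwich $t(\beta)$ between $\tfrac12-\beta$ and $\tfrac12-\beta+\tfrac\epsilon2$ by evaluating the pressure $P(t,\beta)=\mathfrak P(t\varphi+\beta\psi)$ at these two values of $t$ and using that $\beta\mapsto P(\cdot,\beta)$ is strictly decreasing in $t$ (which follows since $P\to+\infty$ as $t\searrow \tfrac12-\beta$ and $P\to-\infty$ as $t\to\infty$, together with analyticity on the region $\{2(t+\beta)>1\}$ from Lemma \ref{lem:pressure is pressure}). The lower bound $t(\beta)>\tfrac12-\beta$ is immediate: by Lemma \ref{lem:pressure is pressure} the pressure $P(t,\beta)$ is finite exactly when $t+\beta>1/2$, and it diverges to $+\infty$ as $t\searrow\tfrac12-\beta$; since $P(t(\beta),\beta)=0<\infty$ we must have $t(\beta)+\beta>1/2$. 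So the content is the upper bound: it suffices to show that for $-\beta$ large enough, $P\!\left(\tfrac12-\beta+\tfrac\epsilon2,\beta\right)<0$.

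For the upper bound I would estimate the partition sum defining $\mathfrak P$ directly at $t=\tfrac12-\beta+\tfrac\epsilon2$. Using the finite primitivity of $(\Sigma_A,\sigma)$ together with \cite[Proposition 2.1.9]{MR2003772}, the finiteness and sign of the pressure at this point is governed by the single-symbol sum $\sum_{a\in I}\e^{\sup_{x\in[a]}t\varphi(x)+\beta\psi(x)}$. By the same computation as in (\ref{eq:PressureDiverging}) — that is, via (\ref{eq:4}), (\ref{eq:distance-additive}) and the two-sided bound (\ref{eq:lower_and_upper_bound_a_i>1}) on the arc lengths — this sum is comparable, up to multiplicative constants depending only on $C_0,C_1,C_2$ and the number of generators, to
\[
\sum_{n\ge1} n^{-2(t+\beta)} \;=\; \sum_{n\ge1} n^{-(1+\epsilon)} \;=\; \zeta(1+\epsilon),
\]
and more precisely, separating the hyperbolic letters (of which there are finitely many, contributing a bounded term) from the parabolic letters, one gets a bound of the form $\sum_{a\in I}\e^{\sup_{x\in[a]}t\varphi+\beta\psi}\le K_1\,\e^{2|\beta|\,C_2}\,\zeta(1+\epsilon)+K_2$ for constants $K_1,K_2$ not depending on $\beta$ — wait, the $\e^{2|\beta|C_2}$ factor blows up, so this crude bound is not enough; one must instead keep track of the $n$-dependence of the constant. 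Concretely, from (\ref{eq:lower_and_upper_bound_a_i>1}) the $n$-th parabolic symbol has $\sup_{x\in[a]}\varphi(x)\le -2\log n$ for the lead term and $\psi$ contributes $-2\log^+(n-3)$, while the additive constants $C_1,C_2$ are absorbed into a fixed multiplicative constant; since $t>0$ the factor $\e^{-2t\log n}$ and $\e^{-2\beta\log n}$ combine to $n^{-2(t+\beta)}=n^{-(1+\epsilon)}$ with no remaining $\beta$-dependence in the exponent, so the sum is bounded by $C\,\zeta(1+\epsilon)$ uniformly in $\beta$, plus the finitely many hyperbolic terms which are likewise $\beta$-independent because for a hyperbolic letter $a_i=0$ so $\psi\equiv 0$ on $[a]$ and only the $t$-part enters, contributing at most a constant since $t=\tfrac12-\beta+\tfrac\epsilon2$ could still be large — here one uses that there are $\le$ (number of hyperbolic generators)$^3$ such letters each contributing $\e^{-t\cdot(\text{positive const})}\to 0$ as $t\to\infty$, i.e. as $\beta\to-\infty$.

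Putting this together: the single-symbol sum stays finite as $\beta\to-\infty$, but this alone only gives finiteness of $P$, not negativity. To get $P<0$ I would instead compare with the pressure of the induced hyperbolic-only system. Following the strategy of Lemma \ref{lem:Asymp_t_right} but now sending $\beta\to-\infty$: write the partition function at $t=\tfrac12-\beta+\tfrac\epsilon2$, split each word $B_1\cdots B_n$ according to which blocks are parabolic of length $\ge 3$; a block of winding length $a$ contributes a factor bounded by $a^{-2t}\cdot a^{-2\beta}=a^{-(1+\epsilon)}$ (up to the fixed constant from $C_1,C_2$), and the geometric/hyperbolic part contributes the pressure-type quantity which at exponent $\tfrac12-\beta+\tfrac\epsilon2$, for $-\beta$ large, is itself very negative. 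More carefully, with $C:=2C_0$ and using the triangle inequality $3(n-k)$ times as in Lemma \ref{lem:Asymp_t_right},
\[
\frac1n\log\!\!\sum_{B_1\cdots B_n\in\G_n}\!\!\e^{-t d(B_1\cdots B_n i,i)-2\beta\sum\log^+(a_i)}
\;\le\;\frac1n\log\sum_{k=0}^n\binom nk \e^{k p_k(t)}\,\e^{3tC(n-k)}\big(2\nu(\zeta(1+\epsilon)-1)\big)^{n-k},
\]
which tends to $\log\!\big(\e^{p(t)}+2\nu\,\e^{3tC}(\zeta(1+\epsilon)-1)\big)$. Since $p(t)=p(\tfrac12-\beta+\tfrac\epsilon2)\to-\infty$ as $\beta\to-\infty$ (by the Bishop--Jones formula $p(s)\to-\infty$ as $s\to\infty$, applied with $s=t\to\infty$), the first summand tends to $0$; however the second summand contains $\e^{3tC}$ which blows up. This is the main obstacle: the crude triangle-inequality bound loses too much when $t$ is large. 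The fix is to not extract the full additive constant $3C$ per long block but to use the sharper estimate (\ref{eq:lower_bound_a_i>3}), $l(\xi_i)\ge 2\log a_i+\log 3$, which for a parabolic block contributes $\e^{-t(2\log a_i+\log 3)}=3^{-t}a_i^{-2t}$; then each long parabolic block contributes $\le 3^{-t}\sum_{a\ge2}a^{-2(t+\beta)}=3^{-t}(\zeta(1+\epsilon)-1)$, and crucially $3^{-t}\to 0$ as $t\to\infty$, i.e. as $\beta\to-\infty$, so the whole per-block factor — including both the geometric and winding contributions of long blocks — tends to $0$. Meanwhile the short blocks (length $\le 2$) contribute a factor whose exponential growth rate is $p(t)\to-\infty$. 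Hence the limiting exponential growth rate is $\le\log\!\big(\e^{p(t)}+2\nu\cdot 3^{-t}(\zeta(1+\epsilon)-1)+2u\cdot(\text{short-hyperbolic term})\big)$, and each piece tends to a value $<1$ as $\beta\to-\infty$, so $P(t,\beta)<0$ for $-\beta$ large enough, which is exactly what we need. Combining with the lower bound $t(\beta)>\tfrac12-\beta$ and monotonicity of $P$ in $t$ completes the proof. $\qed$
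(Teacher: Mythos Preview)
Your lower bound and overall strategy match the paper: reduce to showing $P\bigl(\tfrac12-\beta+\tfrac\epsilon2,\beta\bigr)<0$ for $-\beta$ large. Your route to the upper bound, however, is unnecessarily circuitous, and the final step is left sketchy.

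The paper avoids the binomial split of Lemma~\ref{lem:Asymp_t_right} altogether. Fix $n>2C_0/C_1$ and set $t_\beta:=\tfrac12-\beta+\tfrac\epsilon2$. By the sub-additivity bound (\ref{eq:Subadditive}) together with (\ref{eq:distance-additive}) and the per-block lower bound $l(\xi_i)\ge 2\log^+(a_i)+C_1$ from (\ref{eq:lower_and_upper_bound_a_i>1}), one obtains directly
\[
P(t_\beta,\beta)\;\le\; t_\beta\Bigl(\frac{2C_0}{n}-C_1\Bigr)\;+\;\log\Bigl(2\nu\sum_{k\ge1}k^{-(1+\epsilon)}+2u\Bigr).
\]
The point is that after combining $-2t_\beta\log^+(a_i)$ with $-2\beta\log^+(a_i)$ the exponent on each $a_i$ is the \emph{fixed} value $-(1+\epsilon)$, so the second summand is a $\beta$-independent constant; meanwhile the first summand has negative coefficient $2C_0/n-C_1$ and prefactor $t_\beta\to\infty$, hence $P(t_\beta,\beta)\to-\infty$.

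Your ``fix'' via (\ref{eq:lower_bound_a_i>3}) is in fact this same mechanism --- the factor $3^{-t}$ you extract plays exactly the role of $\e^{-t_\beta C_1}$ --- but you have wrapped it inside the binomial-split framework where it does not belong. Once each block is bounded individually by $l(\xi_i)\ge 2\log^+(a_i)+C_1$, there is no need to separate long blocks from short ones, to invoke $p(t)$, or to incur any triangle-inequality losses of size $3tC$. Your final expression $\e^{p(t)}+2\nu\cdot3^{-t}(\zeta(1+\epsilon)-1)+2u\cdot(\text{short-hyperbolic term})$ is also internally inconsistent, since short hyperbolic blocks are already counted in $p(t)$ as defined in Lemma~\ref{lem:Asymp_t_right}; but this wrinkle disappears once you drop the split and argue as above.
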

\begin{proof}
Combining Lemma \ref{lem:pressure is pressure} and Proposition \ref{pro:exTf(beta)}
we deduce that $t\left(\beta\right)\geq1/2-\beta$.

For the proof of the upper bound it suffices to show that for a given $\epsilon\in\left(0,1/2\right)$
and $t_{\beta}\coloneqq1/2-\beta+\epsilon/2$ we have $P\left(t_{\beta},\beta\right)<0$
for all $-\beta$ large. Recall once more that $\nu$ denotes the number of
cusps and $u=\card\left(H_{0}\right)$. Fix a natural number $n>2C_{0}/C_{1}$
with $C_{0}>0$ taken from (\ref{eq:Subadditive}) and $C_{1}$ taken
from (\ref{eq:lower_and_upper_bound_a_i>1}). By (\ref{eq:Subadditive})
and (\ref{eq:lower_and_upper_bound_a_i>1}) combined with (\ref{eq:distance-additive})
we have for all $\beta$ negative
\begin{align*}
P\left(t_{\beta},\beta\right) & \leq\frac{1}{n}\log\sum_{B_{1}\cdots B_{n}\in\G_{n}}\e^{-t_{\beta}d\left(B_{1}\cdots B_{n}\left(i\right),i\right)-2\beta\sum_{i=1}^{n}\log^{+}(|B_{i}|-1)}+\frac{2t_{\beta}C_{0}}{n}\\
 & \leq\frac{1}{n}\log\sum_{B_{1}\cdots B_{n}\in\G_{n}}\e^{-t_{\beta}\sum_{i=1}^{n}\left(2\log^{+}(a_{i})+C_{1}\right)-2\beta\sum_{i=1}^{n}\log^{+}(a_{i})}+\frac{2t_{\beta}C_{0}}{n}\\
 & \leq t_{\beta}\underbrace{\left(\frac{2C_{0}}{n}-C_{1}\right)}_{<0}+\frac{1}{n}\log\left(\left(2\nu\sum_{k=1}^{\infty}k^{-\left(1+\epsilon\right)}+2u\right)^{n}\right)\to-\infty\,\,\mbox{for }\beta\to-\infty.
\end{align*}
\end{proof}

\section{Proof of facts} \label{section:facts}
\begin{proof}
[Proof of Fact \ref{fact2}] The inclusion $L_{c}\subset\mathcal{F}_{0}$
follows immediately from the definition of $\mathcal{F}_{0}$.
\end{proof}

\begin{proof}
[Proof of Fact \ref{fact3}]  Using (\ref{eq:distance-additive})
and (\ref{eq:lower_and_upper_bound_a_i>1}) along with the property
that $\log(k+1)\le\log^{+}(k)+1$, for $k\in\mathbb{N}_{0}$, we have
\begin{align*}
\liminf_{n\rightarrow\infty}\frac{2\sum_{i=1}^{n}\log^{+}\left(a_{i}(x)\right)}{d\left(B_{1}\cdots B_{n}\left(i\right),i\right)} & \ge\liminf_{n\rightarrow\infty}\frac{\sum_{i=1}^{n}\log^{+}\left(a_{i}(x)\right)}{\sum_{i=1}^{n}\log^{+}\left(a_{i}(x)\right)+n(C_{2}+1)+C_{0}/2}\\
 & =\liminf_{n\rightarrow\infty}\left(1+\frac{n(C_{2}+1)+C_{0}/2}{\sum_{i=1}^{n}\log^{+}(a_{i}(x))}\right)^{-1}.
\end{align*}
By assumption, the Cesàro average of $\log(a_{i}(x))$ tends to infinity, which implies that
\[
\liminf_{n\rightarrow\infty}\left(1+\frac{n(C_{2}+2)+C_{0}/2}{\sum_{i=1}^{n}\log^{+}(a_{i}(x))}\right)^{-1}=1.
\]
Combining this with (\ref{eq:fact1 upper bound}) below finishes the
proof. For the proof of the equality
\[
\dim_{\mathrm{H}}\left(\mathcal{J}\right)=1/2
\]
we refer to \cite{MR2900557}.
\end{proof}

\begin{proof}
[Proof of Fact \ref{fact4}] We prove that if $a_{i}(x)\le K$
for all $i\in\N$, then $x\notin\mathcal{F}_{1}$. By (\ref{eq:lower_and_upper_bound_a_i>1})
we have for $a_{i}\geq0$
\[
2\log^{+}(a_{i})+C_{1}\le l(\xi_{i}).
\]
 Then we have by (\ref{eq:distance-additive})
\begin{align*}
\limsup_{n\rightarrow\infty}\frac{2\sum_{i=1}^{n}\log^{+}(a_{i}(x))}{d\left(B_{1}\cdots B_{n}\left(i\right),i\right)} & \le\limsup_{n\rightarrow\infty}\frac{2\sum_{i=1}^{n}\log^{+}(a_{i}(x))}{\sum_{i=1}^{n}l(\xi_{i})}\\
 & \le\limsup\frac{2\sum_{i=1}^{n}\log^{+}(a_{i}(x))}{2\sum_{i=1}^{n}\log^{+}(a_{i}(x))+nC_{1}}\\
 & \le\left(1+\frac{C_{1}}{2\log(K)}\right)^{-1}<1.
\end{align*}
\end{proof}

\begin{proof}
[Proof of Fact \ref{fact1}] It is well known that  $\left\{ \alpha\in\R\mid\mathcal{F}_{\alpha}\neq\emptyset\right\} $
is an interval (cf. \cite{MR1738952}). By the Facts \ref{fact3} and \ref{fact2} we have
$\mathcal{F}_{0}\not=\emptyset$ and $\mathcal{F}_{1}\not=\emptyset$.
Further, we clearly have on the one hand $0\leq\liminf_{n\rightarrow\infty}\frac{2\sum_{i=1}^{n}\log^{+}(a_{i}(x))}{d\left(B_{1}\cdots B_{n}\left(i\right),i\right)}$
and on the other hand, using (\ref{eq:distance-additive}) and (\ref{eq:lower_and_upper_bound_a_i>1}),
we estimate for all $x\in L_{r}\left(G\right)$,
\begin{equation}
\limsup_{n\rightarrow\infty}\frac{2\sum_{i=1}^{n}\log^{+}(a_{i}(x))}{d\left(B_{1}\cdots B_{n}\left(i\right),i\right)}\le\limsup_{n\rightarrow\infty}\frac{2\sum_{i=1}^{n}\log^{+}(a_{i}(x))}{\sum_{i=1}^{n}l(\xi_{i})}\le\lim_{n\rightarrow\infty}\frac{\sum_{i=1}^{n}l(\xi_{i})}{\sum_{i=1}^{n}l(\xi_{i})}=1.\label{eq:fact1 upper bound}
\end{equation}
This proves the fact.
\end{proof}

\section{Multifractal formalism for conformal graph directed Markov systems}
In this section  we  develop the  multifractal formalism for quotients of Birkhoff sums in the  framework of conformal graph directed Markov systems (cf.  \cite[Section 4.9]{MR2003772}).
Let us first briefly recall the definition a conformal graph directed Markov system  (\cite{MR2003772}). A conformal graph directed Markov system $\Phi$ is given by a finite set of vertices $V$, a family $(X_v)_{v\in V}$ of compact connected subsets of $\mathbb{R}^d$ with $X_v=\overline{\Int(X_v)}$ and an edge set $E\subset V\times V$ together with contractions $(\Phi_e)_{e\in E}$, where $\Phi_e: X_{t(e)}\rightarrow X_{i(e)}$. Here,  $t(e) \in V$ denotes the terminal vertex and  $i(e) \in V$ denotes the initial vertex of the edge $e$.  Moreover, $\Phi$ is endowed with an (edge) incidence matrix $A\in \{ 0,1\}^{E\times E }$ satisfying $A_{e,f}=1$ only if $X_{t(f)}=X_{i(e)}$.  We denote the associated Markov shift with alphabet $E$ and incidence matrix $A$ by $\Sigma:=\Sigma_{\Phi}$.

We always  assume that each $\Phi_e$ has a $C^1$-conformal extension satisfying a Hölder condition as stated in (4c, 4e) of  \cite[Section 4.2]{MR2003772}.  Moreover, we   assume that  $\Phi$ satisfies the open set condition and the cone condition as stated in (4b, 4d) \cite[Section 4.2]{MR2003772}.  Furthermore, we assume that the Markov shift $\Sigma_{\Phi}$  is finitely irreducible (\cite[page 5]{MR2003772}).

There is a natural coding map $p: \Sigma\rightarrow \bigcup_{v\in V}X_v$. The associated geometric potential $\zeta:=\zeta_{\Phi}:\Sigma\rightarrow \R^{-}$ is Hölder continuous with respect to the shift metric.
Let $\psi:\Sigma \rightarrow \R$ denote another Hölder continuous map. As in  (\cite{MR2719683}) we define the associated free energy function $t:\mathbb{R} \rightarrow \mathbb{R}\cup \{ +\infty\}$  which is for $\beta \in \R$ given by
$$ t(\beta) := \inf \left\{ t\in \R \mid \mathcal{P}(t\zeta +\beta \psi )\le 0 \right\}.$$ The function $t$ is a closed convex function with domain $dom(t)$, and we denote by $t^{-}$  (resp.  $t^{+}$)  its left (resp. right) derivative. We denote by $\hat{t}$ the Legendre transform of $t$. Define also
$$\alpha_{-}:=\inf \left\{ -t^{-}(x) \mid x\in dom(t) \right\} \text{ and } \alpha_{+}:=\sup \left\{ -t^{+}(x) \mid x\in dom(t) \right\}. $$

For $\alpha\in \R$ define the sets
$$\mathcal{F}_\alpha(\Phi,\psi) :=p\left\{ \omega \in \Sigma_{\Phi} \mid \lim_{k\rightarrow \infty} \frac{S_k \psi(\omega)}{S_k\zeta_{\Phi}(\omega)}=\alpha \right\}.$$

Set  $\alpha_{0}\coloneqq -t^{+}(0)$  and let
\[
\mathcal{F}_{\alpha}^{*}(\Phi,\psi)\coloneqq \begin{cases}
p\left\{ \omega \in \Sigma_{\Phi} \mid\limsup_{k\rightarrow \infty} \frac{S_k \psi(\omega)}{S_k\zeta_{\Phi}(\omega)}\geq\alpha\right\} , & \alpha\geq\alpha_{0},\\
p\left\{ \omega \in \Sigma_{\Phi} \mid \liminf_{k\rightarrow \infty} \frac{S_k \psi(\omega)}{S_k\zeta_{\Phi}(\omega)}\leq\alpha\right\} , & \alpha<\alpha_{0}.
\end{cases}
\]
Our main result is an extension of our multifractal formalism for conformal iterated function system (\cite{MR2719683}).
\begin{thm}\label{thm:mf} For every conformal graph directed Markov system $\Phi$ satisfying the above assumptions, for every Hölder continuous potential $\psi$ on the associated Markov shift and for every $\alpha \in (\alpha_{-},\alpha_{+})$ we have
$$ \dim_H(\mathcal{F}_\alpha(\Phi,\psi))=\dim_H(\mathcal{F}_{\alpha}^{*}(\Phi,\psi))=-\hat{t}(-\alpha).$$ For every $\alpha\in \mathbb{R}$  we have $\dim_H(\mathcal{F}_{\alpha}^{*}(\Phi,\psi))\le \max\{-\hat{t}(-\alpha),0\}$ and if $-\hat{t}(-\alpha)<0$ then $\mathcal{F}_{\alpha}^{*}(\Phi,\psi)=\emptyset$.
\end{thm}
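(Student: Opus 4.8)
The plan is to follow our treatment of conformal iterated function systems in \cite{MR2719683}, checking that nothing beyond finite irreducibility of $\Sigma_\Phi$ is ever needed; the only genuinely new feature is that concatenations of admissible words must stay admissible, and finite irreducibility supplies, for any two symbols, a joining word of bounded length. Throughout write $-\hat t(-\alpha)=\inf_{\beta\in\R}\bigl(t(\beta)+\alpha\beta\bigr)$, the identity to be reproduced, and recall that conformality together with the cone and open set conditions yields uniform bounded distortion, so that $\diam\bigl(p([\omega])\bigr)\asymp\e^{S_{|\omega|}\zeta_\Phi(\omega)}$. Since the limit equalling $\alpha$ forces the relevant one-sided limit onto the correct side of $\alpha$, we have $\mathcal{F}_\alpha(\Phi,\psi)\subseteq\mathcal{F}_\alpha^{*}(\Phi,\psi)$ for every $\alpha$; hence it suffices to prove the upper bound $\dim_H\mathcal{F}_\alpha^{*}(\Phi,\psi)\le\max\{-\hat t(-\alpha),0\}$ together with the emptiness claim for all $\alpha\in\R$, and the lower bound $\dim_H\mathcal{F}_\alpha(\Phi,\psi)\ge-\hat t(-\alpha)$ for $\alpha\in(\alpha_-,\alpha_+)$; since the latter exhibits $-\hat t(-\alpha)$ as a Hausdorff dimension (so $-\hat t(-\alpha)\ge0$), combining the two with the inclusion then forces $\dim_H\mathcal{F}_\alpha(\Phi,\psi)=\dim_H\mathcal{F}_\alpha^{*}(\Phi,\psi)=-\hat t(-\alpha)$ there.

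For the upper bound I would argue by a covering estimate. Fix $\beta$ of the sign dictated by the definition of $\mathcal{F}_\alpha^{*}$ (so $\beta\le0$ when $\alpha\ge\alpha_0$ and the $\limsup$ version is in force, $\beta\ge0$ when $\alpha<\alpha_0$), fix $\epsilon>0$ and $N\in\N$, and cover $\mathcal{F}_\alpha^{*}(\Phi,\psi)$ by the sets $p([\omega])$ over all admissible words $\omega$ of length $k\ge N$ along which $S_k\psi/S_k\zeta_\Phi$ lies within $\epsilon$ of $\alpha$ on the side relevant to $\mathcal{F}_\alpha^{*}$. Since $\zeta_\Phi<0$, this one-sided bound on the quotient converts into a one-sided bound on $\beta S_k\psi$, and bounded distortion then gives, for all $s\in\R$,
\[
\diam\bigl(p([\omega])\bigr)^{s}\ll\exp\Bigl(\bigl(s-\alpha\beta-|\beta|\epsilon\bigr)S_k\zeta_\Phi(\omega)+\beta S_k\psi(\omega)\Bigr).
\]
Summing over all admissible length-$k$ words, the right-hand side is $\asymp\e^{k\,\mathcal{P}((s-\alpha\beta-|\beta|\epsilon)\zeta_\Phi+\beta\psi)}$; for $s>t(\beta)+\alpha\beta+|\beta|\epsilon$ the coefficient of $\zeta_\Phi$ strictly exceeds $t(\beta)$, so this pressure is strictly negative, the tail $\sum_{k\ge N}$ of these sums tends to $0$ as $N\to\infty$, and hence $\mathcal H^{s}\bigl(\mathcal{F}_\alpha^{*}(\Phi,\psi)\bigr)=0$. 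Letting $\epsilon\downarrow0$ for fixed $\beta$ and then infimising over $\beta$ gives $\dim_H\mathcal{F}_\alpha^{*}(\Phi,\psi)\le-\hat t(-\alpha)$, and $\dim_H\ge0$ is automatic. If moreover $-\hat t(-\alpha)<0$, choose $\beta$ of the admissible sign with $t(\beta)+\alpha\beta<0$ and $\epsilon$ so small that $t(\beta)+\alpha\beta+|\beta|\epsilon<0$; the display with $s=0$ then bounds the total number of admissible ``good'' words, which must therefore be finite, so $\mathcal{F}_\alpha^{*}(\Phi,\psi)=\emptyset$.

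For the lower bound, fix $\alpha\in(\alpha_-,\alpha_+)$ and let $\beta^*$ realise $\inf_\beta\bigl(t(\beta)+\alpha\beta\bigr)$; then $\beta^*$ is finite and lies in the interior of $\mathrm{dom}(t)$, $t$ is real-analytic there (by the argument of Proposition \ref{pro:exTf(beta)}), and $t'(\beta^*)=-\alpha$. Pass to finite subsystems: enumerate $E$, pick finite $E_n\uparrow E$ whose incidence matrices are irreducible, obtaining conformal GDMS $\Phi_n\uparrow\Phi$, and invoke the standard finite approximation of pressure on finitely irreducible Markov shifts (\cite{MR2003772}) to get $t_n\uparrow t$ pointwise; as the $t_n$ are convex this also gives $t_n'(\beta^*)\to t'(\beta^*)$. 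On each $\Phi_n$ the finite-state thermodynamic formalism applies: the potential $t_n(\beta^*)\zeta_{\Phi_n}+\beta^*\psi$ admits an ergodic Gibbs equilibrium state $\mu_n$; Birkhoff's theorem gives $S_k\psi/S_k\zeta_{\Phi_n}\to\alpha_n:=\bigl(\int\psi\,d\mu_n\bigr)/\bigl(\int\zeta_{\Phi_n}\,d\mu_n\bigr)=-t_n'(\beta^*)$ $\mu_n$-almost everywhere; and the volume lemma with the equilibrium relation $h(\mu_n)+\int\bigl(t_n(\beta^*)\zeta_{\Phi_n}+\beta^*\psi\bigr)\,d\mu_n=0$ yields $\dim_H\mu_n=h(\mu_n)/\bigl(-\int\zeta_{\Phi_n}\,d\mu_n\bigr)=t_n(\beta^*)+\alpha_n\beta^*$. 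Hence $\alpha_n\to\alpha$ and $\dim_H\mu_n=t_n(\beta^*)+\alpha_n\beta^*\to t(\beta^*)+\alpha\beta^*=-\hat t(-\alpha)$.

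It remains to produce a single measure supported on $\mathcal{F}_\alpha(\Phi,\psi)$ of dimension at least $-\hat t(-\alpha)$, since $\mu_n$ only lives on $\mathcal{F}_{\alpha_n}(\Phi_n,\psi)$; this is the Moran-type construction of \cite{MR2719683}. One concatenates successively longer admissible blocks, the $j$-th being $\mu_{n(j)}$-typical with $n(j)\to\infty$ slowly, splicing consecutive blocks by joining words of bounded length from finite irreducibility, and letting the block lengths grow fast enough that these joining words contribute negligibly to both Birkhoff sums; then $S_k\psi/S_k\zeta_\Phi$ along the resulting sequence converges to $\alpha$ (interpolating $\alpha_n\to\alpha$), while a mass distribution principle resting on bounded distortion and the Gibbs property of the constituents shows the resulting measure has Hausdorff dimension at least $\liminf_n\dim_H\mu_n=-\hat t(-\alpha)$. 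I expect this construction to be the main obstacle: one has to force exact convergence of the Birkhoff quotient to $\alpha$, maintain a uniform lower bound on local dimensions across infinitely many blocks and joining words, and keep the constants in the mass distribution principle under control through the splicing—all of which hinge on $\alpha$ lying in the \emph{open} interval $(\alpha_-,\alpha_+)$, so that $\beta^*$ is an interior point and the dimensions $\dim_H\mu_n$ behave well in the limit. The remaining ingredients—the covering estimate, bounded distortion, and the finite-state equilibrium theory—are routine or directly quotable.
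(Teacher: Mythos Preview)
Your proposal is correct and follows essentially the same approach as the paper, which in fact gives only a brief sketch: upper bound via standard covering arguments as in \cite[Proof of Theorem 4.2.13]{MR2003772}, lower bound via approximation by finitely-generated subsystems using the pressure approximation property \cite[Theorem 2.1.5]{MR2003772} and then the methods of \cite{MR2719683}. Your write-up is considerably more detailed than the paper's sketch, and you correctly identify the Moran-type construction with joining words from finite irreducibility as the main technical point requiring care.
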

Theorem \ref{thm:mf} can be proved by the same methods as in \cite{MR2719683}. The proof of the upper bound of the Hausdorff dimension follows from standard covering arguments (see for instance \cite[Proof of Theorem 4.2.13]{{MR2003772}}). To prove the lower bound of the Hausdorff dimension of the multifractal level sets, the key is to approximate the infinitely-generated conformal graph directed Markov system $\Phi$ by  finitely-generated subsystems. Previously, this method has been  used in \cite[Theorem 4.2.13]{{MR2003772}} to obtain the lower bound of the Hausdorff dimension of  the   limit set of a conformal graph directed Markov system. For the special case of conformal iterated function systems, the same  method  proved successful also for the level sets of multifractal decompositions of limit sets (\cite{MR2719683}). It is straightforward to extend the proof in \cite{MR2719683} to conformal graph directed Markov systems.  The key technical detail is the approximation property for the topological pressure for Hölder continuous potentials on finitely-irreducible topological Markov shifts \cite[Theorem 2.1.5]{MR2003772}.

\section{Proof of Theorem \ref{main} }

In this section we give the proof of  Theorem \ref{main}. For the interior points of the spectrum, the theorem is an application of our general multifractal formalism for conformal graph directed Markov systems. For the boundary points, additional arguments are required.

\subsection*{Multifractal formalism for the interior of the spectrum}
It is known that the radial limit set $L_{r}(G)$ of a free Fuchsian group
$G$ has a representation as an infinite conformal graph directed
Markov system $\Phi$ (see, for example, \cite{MR2337557}). The vertex set $V$ is given by the symmetric set of generators $G_0$. For each $g \in G_0$ the compact connected set $X_g\subset \R$ is given by the closure of ${b_s}$, where $b_s$   is the projection of the face $s\in \mathcal{F}$ such that  $g=g_s$ is the face-pairing transformation with respect to  Dirichlet fundamental domain of $G$ (see (\ref{bs-def}) in Section \ref{subsec:The-canonical-Markov} for the details).  Note that by conjugating the group $G$, we may assume that the point at infinity does not belong to the limit set of $G$.
The edge set $E$ is given by
 \begin{align*}
E:=I & \coloneqq\left\{ g_{1}\gamma^{n}g_{2}\mid\gamma\in\Gamma_{0},g_{1},g_{2}\in G_{0}\setminus\{\gamma^{\pm1}\},n\in\N\right\} \cup\\
 & \quad\quad\cup\left\{ g_{1}hg_{2}\mid g_{1}\in G_{0},h\in H_{0}\setminus\{g_{1}^{-1}\},g_{2}\in G_{0}\setminus\{h^{-1}\}\right\} .
\end{align*}
For $\eta_1\dots\eta_n\in E$ we define  $t(\eta_1\dots\eta_n):=\eta_1$ and $i(\eta_1\dots\eta_n):=\eta_{n-2}$.  We consider the incidence matrix $A\in \{0,1\}^{E\times E}$ which satisfies for  $e=\eta_1\dots\eta_n$ and $f=\tau_1\dots\tau_m\in E$ that  $A(e,f)=1$ if and only if $\eta_{n-1}=\tau_1$ and $\eta_{n}=\tau_2$. For such edges we define
\[ \Phi_{e,f}:X_{t(f)}\rightarrow X_{i(e)},\quad \Phi_{e,f}:=\eta_1^{-1}\dots \eta_{n-2}^{-1}.
\]
 Note that the Markov shift $\Sigma_{\Phi}$ associated with the graph directed Markov system $\Phi$ coincides with the Markov shift $\Sigma_A$ in Section \ref{subsec:Inducing-and-the-Gibbs_Markov_Map}.  Also note that the coding map $p$ of $\Phi$ is the inverse of the coding map $\pi$ defined in Section \ref{subsec:Inducing-and-the-Gibbs_Markov_Map}.  Observe that $\zeta_{\Phi}$ is equal to the potential $\phi$ defined in Section \ref{subsec:Inducing-and-the-Gibbs_Markov_Map}.  Define  $\psi:\Sigma_{\Phi} \rightarrow R$ as in Section \ref{subsec:Inducing-and-the-Gibbs_Markov_Map} and recall from   Lemma \ref{lem:pressure is pressure} and  Proposition \ref{pro:exTf(beta)} that   $t$ associated with $\Phi$ and $\psi$  coincides with the  real-analytic function $t$ defined in Section \ref{subsec:Inducing-and-the-Gibbs_Markov_Map}.  Hence, by Theorem \ref{thm:mf},  we have for $\alpha\in (\alpha_-,\alpha_+)$,
\begin{equation}
\dim_{H}\left(\mathcal{F}_\alpha(\Phi,\psi)\right)=\dim_{H}\left(\mathcal{F}^{*}_\alpha(\Phi,\psi)\right)=-\hat{t}(-\alpha).\label{eq:ExactSpec=00003DleqSpec}
\end{equation}

By the definition of $\Phi$ and the potential $\psi$, and by Lemma \ref{lem:pressure is pressure} we have
\[
\mathcal{F}_\alpha(\Phi,\psi)=\mathcal{F}_\alpha \cap \mathcal{D} \text{ and }
\mathcal{F}^{*}_\alpha(\Phi,\psi)=\mathcal{F}^{*}_\alpha \cap \mathcal{D}.
\]
Combining this with  Fact \ref{fact1},  we see that  $\alpha_-=0$ and $\alpha_+=1$.
Finally, since  $\mathcal{F}_{\alpha}$ (resp. $\mathcal{F}^{*}_\alpha$) is a countable union of bi-Lipschitz images of $\mathcal{F}_{\alpha}\cap \mathcal{D}$ (resp. $\mathcal{F}^{*}_\alpha\cap \mathcal{D})$,  we conclude that $\dim_H({\mathcal{F}_\alpha})=\dim_H({\mathcal{F}^{*}_\alpha})=-\hat{t}(-\alpha)$.



\subsection*{Asymptotic behaviour for the left boundary point}

By Fact \ref{fact2} we have
\[
\dim_{\mathrm{H}}\left(\mathcal{F}_{0}\right)\geq\dim_{\mathrm{H}}\left(L_{c}\right)=\delta_{c}.
\]
Combining (\ref{eq:ExactSpec=00003DleqSpec}) and Lemma \ref{lem:Asymp_t_right}
the lower bound follows from
\[
\dim_{\mathrm{H}}\left(\mathcal{F}_{0}\right)\leq\lim_{\alpha\to0}\dim_{\mathrm{H}}\left(\mathcal{F}_{\alpha}^{*}\right)=\delta_{c}.
\]

\subsection*{Asymptotic behaviour for the right boundary point}

Combining Fact \ref{fact3} and \ref{fact4} we obtain $\dim_{\mathrm{H}}\left(\mathcal{F}_{1}\right)\geq\dim_{\mathrm{H}}\left(\mathcal{J}\right)=1/2$.

Now this observation and the fact that $\mathcal{F}_{1}\subset\mathcal{F}_{\alpha}^{*}$
for every $\alpha_{0}<\alpha<1$ together with Lemma \ref{lem:tAsymptoticFor-right}
finally gives

\[
\dim_{\mathrm{H}}\mathcal{F}_{1}\leq\lim_{\alpha\nearrow1}\dim_{\mathrm{H}}\mathcal{F}_{\alpha}^{*}\leq1/2.
\]
This proves the claim for the properties of the right boundary point
of the spectrum.

\subsection*{The derivatives in the boundary points}

For the derivative of $f$ at the boundary points we use the general
identity for Legendre transforms
\[
f'\left(\alpha\right)=\left(\widehat{t}\right)'\left(-\alpha\right)=-\left(t'\right)^{-1}\left(-\alpha\right),\,\,\,\,\alpha\in\left(0,1\right).
\]
The real-analyticity then gives
\[
\lim_{\alpha\searrow0}f'\left(\alpha\right)=-\lim_{\alpha\nearrow1}f'\left(\alpha\right)=+\infty.
\]
\subsection*{Irregular set}
The fact that the set of points for which the  mean cusp winding number does not exist has full Hausdorff dimension $\delta(G)$ follows from \cite{MR1759398} by exhaustion of $\Sigma_{A}$  with finite alphabet subsystems.

\providecommand{\bysame}{\leavevmode\hbox to3em{\hrulefill}\thinspace}
\providecommand{\MR}{\relax\ifhmode\unskip\space\fi MR }
\providecommand{\MRhref}[2]{%
  \href{http://www.ams.org/mathscinet-getitem?mr=#1}{#2}
}
\providecommand{\href}[2]{#2}

\end{document}